\definecolor{verylight}{gray}{0.97}
\definecolor{light}{gray}{0.9}
\definecolor{medium}{gray}{0.85}
\definecolor{dark}{gray}{0.6}
 \def\G{{\mathcal G}}
 \def\opn#1#2{\def#1{\operatorname{#2}}} % to make operators
 \opn\chara{char} \opn\length{\ell} \opn\pd{pd} \opn\rk{rk}
 \opn\projdim{proj\,dim} \opn\injdim{inj\,dim} \opn\rank{rank}
 \opn\depth{depth} \opn\grade{grade} \opn\height{height}
 \opn\embdim{emb\,dim} \opn\codim{codim}
 \opn\Tr{Tr} \opn\bigrank{big\,rank}
 \opn\superheight{superheight}\opn\lcm{lcm}
 \opn\trdeg{tr\,deg}%\emph{
 \opn\reg{reg} \opn\lreg{lreg} \opn\ini{in} \opn\lpd{lpd}
 \opn\size{size} \opn\sdepth{sdepth}
 \opn\link{link}\opn\fdepth{fdepth}\opn\lex{lex}
 \opn\tr{tr}
 \opn\type{type}
 \opn\Borel{Borel}
 \opn\div{div} \opn\Div{Div} \opn\cl{cl} \opn\Cl{Cl}
 \opn\Spec{Spec} \opn\Supp{Supp} \opn\supp{supp} \opn\Sing{Sing}
 \opn\Ass{Ass} \opn\Min{Min}\opn\Mon{Mon}
 \opn\Ann{Ann} \opn\Rad{Rad} \opn\Soc{Soc}
 \opn\Im{Im} \opn\Ker{Ker} \opn\Coker{Coker} \opn\Am{Am}
 \opn\Hom{Hom} \opn\Tor{Tor} \opn\Ext{Ext} \opn\End{End}
 \opn\Aut{Aut} \opn\id{id}
 \opn\nat{nat}
 \opn\pff{pf}%   \pf exists already
 \opn\Pf{Pf} \opn\GL{GL} \opn\SL{SL} \opn\mod{mod} \opn\ord{ord}
 \opn\Gin{Gin} \opn\Hilb{Hilb}\opn\sort{sort}
 \opn\PF{PF}\opn\Ap{Ap}
 \opn\aff{aff} \opn
\opn\relint{relint} \opn\st{st}
 \opn\lk{lk} \opn\cn{cn} \opn\core{core} \opn\vol{vol}  \opn\inp{inp} \opn\nilpot{nilpot}
 \opn\link{link} \opn\star{star}\opn\lex{lex}\opn\set{set}
 \opn\width{wd}
 \opn\Fr{F}
 \opn\QF{QF}
 \opn\G{G}
 \opn\type{type}\opn\res{res}
 \opn\gr{gr}
 \def\pot#1#2{#1[\kern-0.28ex[#2]\kern-0.28ex]}
 \opn\dirlim{\underrightarrow{\lim}}
 \opn\inivlim{\underleftarrow{\lim}}
 \def\Implies{\ifmmode\Longrightarrow \else
         \unskip${}\Longrightarrow{}$\ignorespaces\fi}
 \def\implies{\ifmmode\Rightarrow \else
         \unskip${}\Rightarrow{}$\ignorespaces\fi}
 \def\iff{\ifmmode\Longleftrightarrow \else
         \unskip${}\Longleftrightarrow{}$\ignorespaces\fi}
 \newtheorem{Theorem}{Theorem}[section]
 \newtheorem{Lemma}[Theorem]{Lemma}
 \newtheorem{Corollary}[Theorem]{Corollary}
 \newtheorem{Example}[Theorem]{Example}
 \newtheorem{Definition}[Theorem]{Definition}
 \let\epsilon\varepsilon
 \let\kappa=\varkappa
 \def\qed{\ifhmode\textqed\fi
       \ifmmode\ifinner\quad\qedsymbol\else\dispqed\fi\fi}
 \def\textqed{\unskip\nobreak\penalty50
        \hskip2em\hbox{}\nobreak\hfil\qedsymbol
        \parfillskip=0pt \finalhyphendemerits=0}
 \def\dispqed{\rlap{\qquad\qedsymbol}}
 \opn\dis{dis}
 \def\pnt{{\raise0.5mm\hbox{\large\bf.}}}
 \opn\Lex{Lex}
\begin{document}
%\linenumbers
\title {Projective dimension and  regularity  of  edge ideal of some weighted oriented graphs}

\author {Guangjun Zhu$^{^*}$, Li Xu, Hong Wang and Zhongming Tang}

\address{Authors¡¯ address:  School of Mathematical Sciences, Soochow
University, Suzhou 215006, P.R. China}
\email{zhuguangjun@suda.edu.cn(Guangjun Zhu), 1240470845@qq.com(Li Xu), 651634806@qq.com(Hong Wang), zmtang@suda.edu.cn(Zhongming Tang).}

\dedicatory{ }

\begin{abstract}
In this paper we provide some exact formulas for the projective dimension and the regularity
of edge ideals associated to vertex weighted rooted forests and  oriented  cycles. As some consequences, we give  some exact formulas for the depth
of these ideals.
\end{abstract}

\thanks{* Corresponding author}

\subjclass[2010]{13D02, 13F55, 13C15, 13D99}
%		13H10   	Special types (Cohen-Macaulay, Gorenstein, Buchsbaum, etc.)
%		13D02   	Syzygies, resolutions, complexes
%		05E40   	Combinatorial aspects of commutative algebra
%		16S36   	Ordinary and skew polynomial rings and semigroup rings

%		14M25   	Toric varieties, Newton polyhedra [See also 52B20]
%		13A02   	Graded rings
%		13F20   	Polynomial rings and ideals; rings of integer-valued polynomials
%		13A18   	Valuations and their generalizations
%		06A11   	Algebraic aspects of posets

\keywords{projective dimension, regularity, weighted rooted forest, weighted oriented  cycle, edge ideal}

\maketitle

\setcounter{tocdepth}{1}
%\tableofcontents

\section{Introduction}

\hspace{3mm} A {\em directed graph} $D$ consists of a finite set $V(D)$ of vertices, together
with a prescribed collection $E(D)$ of ordered pairs of distinct points called edges or
arrows.  If $\{u,v\}\in E(D)$ is an edge, we write $uv$ for $\{u,v\}$, which is denoted to be the directed edge
where the direction is from $u$ to $v$ and $u$ (resp. $v$) is called the {\em starting}  point (resp. the {\em ending} point).
An {\em  oriented} graph is a  directed  graph  having no bidirected edges (i.e. each pair of vertices is joined by a single edge having a unique direction). In other words an oriented graph $D$ is a simple graph $G$ together with an orientation
of its edges. We call $G$ the underlying graph of $D$.

An oriented graph having no multiple edges (edges with same starting point and ending point) or loops (edges that connect vertices to themselves) is called a {\em simple oriented graph}.  An {\em oriented} cycle is a cycle graph in which all edges are oriented in  clockwise  or in counterclockwise.
An oriented acyclic graph is a simple directed graph without oriented cycles.
An {\em  oriented tree} or polytree is a  oriented acyclic graph formed by orienting the edges of undirected acyclic graphs.
A  {\em rooted tree} is an oriented tree in which all edges  are oriented  either away from or
towards the root.  Unless specifically stated, a rooted tree in this article
 is an oriented tree in which all edges  are oriented away from  the root.
A {\em leaf} of a  tree is a vertex adjacent to only one other vertex.
An {\em oriented forest} is a disjoint union of oriented trees. A {\em rooted forest} is a disjoint union of rooted trees.

 A vertex-weighted oriented graph is a triplet $D=(V(D), E(D),w)$, where $V(D)$ is the  vertex set,
$E(D)$ is the edge set and $w$ is a weight function $w: V(D)\rightarrow \mathbb{N}^{+}$, here $N^{+}=\{1,2,\ldots\}$.
Some times for short we denote the vertex set $V(D)$ and edge set $E(D)$
by $V$ and $E$ respectively.
The weight of $x_i\in V$ is $w(x_i)$, denoted by $w_i$.

The edge ideal of a vertex-weighted directed graph was first introduced by Gimenez et al \cite{GBSVV}. Let $D=(V,E,w)$ be a  vertex-weighted oriented graph with the vertex set $V=\{x_{1},\ldots,x_{n}\}$. We consider the polynomial ring $S=k[x_{1},\dots, x_{n}]$ in $n$ variables over a field $k$. The edge ideal of $D$,  denoted by $I(D)$, is the ideal of $S$ given by
$$I(D)=(x_ix_j^{w_j}\mid  x_ix_j\in E).$$

If  $w_j=1$ for all $j$, then $I(D)$ is the edge ideal of underlying graph $G$ of $D$.
It has been extensively studied in the literature \cite{HT3,MV,SVV,T,V1}.
Especially the study of algebraic invariants corresponding to their minimal free resolutions has become popular(see
 \cite{AF1,AF2,BHTN,BHO,DHS,F, HT2,J,KM,Z1,Z2}).

In this article, we focus on  algebraic properties  corresponding to projective dimension and regularity of the edge ideals of some weighted oriented graphs.

Edge ideals of edge-weighted graphs were introduced and studied by Paulsen
and Sather-Wagstaff \cite{PS}.
In this work we consider edge ideals of vertex-weighted oriented graph (See \cite{GBSVV,PRT}).
In what follows by a weighted  oriented graph we shall always mean a vertex-weighted oriented graph.
Edge ideals of weighted oriented graphs arose in the theory of Reed-Muller codes as initial ideals of
vanishing ideals of projective spaces over finite fields \cite{MPV}.

This paper is organized as follows. In the next section, we recall several definitions and
terminology which we need later. In Secetions 3 and 4, using  the approach of a Betti splitting and  polarization, we derive some exact formulas for the projective dimension and regularity of the edges of weighted rooted forests and oriented  cycles. The results are as follows:

\begin{Theorem}
Let $D(V(D), E(D),w)$ be a weighted oriented star graph
such that  $w(x)\geq 2$  for any vertex $x$. If $E(D)$ is one of the following cases $\{x_1x_2,x_1x_3\ldots,x_1x_n\}$, $\{x_2x_1,x_3x_1\ldots,x_nx_1\}$ and $\{x_1x_2,x_2x_3\ldots,x_2x_n\}$. Then
$$\mbox{pd}\,(I(D))=|E(D)|-1,\ \ \mbox{and}\ \
 \mbox{reg}\,(I(D))=\sum\limits_{x\in V(D)}w(x)-|E(D)|+1.$$
\end{Theorem}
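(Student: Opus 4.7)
The plan is to verify each of the three prescribed forms of $E(D)$ separately. In Cases~1 and~2 the ideal $I(D)$ factors as a single monomial times a monomial regular sequence, so the minimal free resolution is a shifted Koszul complex and the invariants can be read off directly. Case~3 does not factor in this way and will require the Betti splitting technique of Francisco--H\`a--Van Tuyl.

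For Case~1 I rewrite $I(D) = x_1 \cdot (x_2^{w_2}, \ldots, x_n^{w_n})$. Since $(x_2^{w_2}, \ldots, x_n^{w_n})$ is a monomial regular sequence on $n-1$ disjoint variables, its minimal free resolution is the Koszul complex on generators of degrees $w_2, \ldots, w_n$, and multiplication by the extra variable $x_1$ only shifts every internal degree by $1$. Reading off $\mbox{pd}$ and $\mbox{reg}$ from this twisted Koszul complex gives $n-2 = |E(D)|-1$ and $\sum_{i=2}^{n} w_i - n + 3$; the latter matches $\sum_{x \in V(D)} w(x) - |E(D)| + 1$ under the standing convention that $w(x) = 1$ whenever $x$ is never an ending vertex (here, for $x = x_1$). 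Case~2 is entirely analogous: $I(D) = x_1^{w_1}(x_2, \ldots, x_n)$, and the Koszul complex of the linear regular sequence, scaled by $x_1^{w_1}$, delivers $\mbox{pd}(I(D)) = n - 2$ and $\mbox{reg}(I(D)) = w_1 + 1$.

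For Case~3 I partition the generators of $I(D)$ by divisibility by $x_1$ and set
\[
I_1 = (x_1 x_2^{w_2}), \qquad I_2 = x_2 \cdot (x_3^{w_3}, \ldots, x_n^{w_n}).
\]
A direct lcm computation yields $I_1 \cap I_2 = x_1 x_2^{w_2} \cdot (x_3^{w_3}, \ldots, x_n^{w_n})$, which is again of Case~1 type, so each of $I_1$, $I_2$, and $I_1 \cap I_2$ has an explicit Koszul-type minimal free resolution. The Betti splitting formulas
\begin{align*}
\mbox{pd}(I(D)) &= \max\{\mbox{pd}(I_1),\, \mbox{pd}(I_2),\, \mbox{pd}(I_1 \cap I_2) + 1\},\\
\mbox{reg}(I(D)) &= \max\{\mbox{reg}(I_1),\, \mbox{reg}(I_2),\, \mbox{reg}(I_1 \cap I_2) - 1\}
\end{align*}
then reduce the computation of the invariants of $I(D)$ to a numerical comparison among three explicit integers.

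The main obstacle is twofold. First, one must certify that the decomposition $I(D) = I_1 + I_2$ is a genuine Betti splitting; the cleanest route is to polarize so that $I(D)^{\mathrm{pol}}$ is squarefree and then invoke the standard fact that an $x_i$-partition of the generators of a squarefree monomial ideal is a Betti splitting, transferring the equalities back via the invariance of $\mbox{pd}$ and $\mbox{reg}$ under polarization. Second, one must show that the $\mbox{reg}(I_1 \cap I_2) - 1$ term dominates in the regularity formula; this boils down to the numerical inequality $\sum_{i=3}^{n} w_i \ge n-2$, which is immediate from the hypothesis $w(x) \ge 2$.
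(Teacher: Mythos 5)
Your overall route is the same as the paper's: Cases 1 and 2 are handled by writing $I(D)$ as a monomial times a regular sequence in disjoint variables (the paper phrases this through its Lemmas \ref{lem1} and \ref{lem2} rather than through the Koszul complex, but it is the identical computation), and in Case 3 your decomposition $I_1=(x_1x_2^{w_2})$, $I_2=x_2(x_3^{w_3},\ldots,x_n^{w_n})$ with $I_1\cap I_2=x_1x_2^{w_2}(x_3^{w_3},\ldots,x_n^{w_n})$ is exactly the paper's $J$, $K=x_2L$ and $J\cap K=JL$. The one genuine gap is your justification that $I(D)=I_1+I_2$ is a Betti splitting. There is no ``standard fact'' that an arbitrary $x_i$-partition of the generators of a squarefree monomial ideal is a Betti splitting; this is false in general (indeed whether a given partition is a Betti splitting can depend on the characteristic of $k$), which is precisely why \cite[Corollary 2.7]{FHT} --- quoted in the paper as Theorem \ref{Thm1} --- adds the hypothesis that the part $J$ containing all generators divisible by $x_i$ has a linear resolution. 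The repair is one line: $I_1$ is principal, hence has a linear resolution, so Theorem \ref{Thm1} applies directly to the $x_1$-partition of $I(D)$ itself, and no polarization is needed at all. As written, though, the step rests on an unproved (and untrue) general claim.

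A second point, where you are actually more careful than the paper in Case 1 but not consistent: the regularity formula $\sum_{x\in V(D)}w(x)-|E(D)|+1$ only comes out if the weight of every source vertex is taken to be $1$, since $w(x)$ for a source $x$ never appears in $I(D)$. You flag this for $x_1$ in Case 1, but the same issue occurs in Case 2 (where $x_2,\ldots,x_n$ are all sources, and your answer $w_1+1$ matches the stated formula only under that convention) and in Case 3 (where $x_1$ is a source; your three explicit integers give $\sum_{i=2}^{n}w_i-n+3$, and the paper's own final line silently rewrites this as $\sum_{i=1}^{n}w_i-(n-1)+1$, an identity only when $w_1=1$). Since the theorem hypothesizes $w(x)\geq 2$ for every vertex, the convention you invoke contradicts the stated hypotheses; if you adopt it, you must say so once and apply it uniformly in all three cases, not only in Case 1.
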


\begin{Theorem}
Let $D=(V(D),E(D),w)$ be a weighted rooted forest such that  $w(x)\geq 2$   for any vertex $x$.
Then $\mbox{pd}\,(I(D))=|E(D)|-1$.
\end{Theorem}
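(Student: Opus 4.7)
My plan is to prove that the Taylor complex of $I(D)$ is already a minimal free resolution of $S/I(D)$; from its length, both bounds on the projective dimension follow at once, without needing any Betti splitting or polarization.

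Recall that for a monomial ideal $I = (m_1,\dots,m_r) \subset S$ the Taylor complex is a free resolution of $S/I$ of length $r$, with basis indexed by subsets $\sigma \subseteq \{1,\dots,r\}$ and differential
\[
 d(e_\sigma) \;=\; \sum_{i \in \sigma} \pm\,\frac{m_\sigma}{m_{\sigma \setminus i}}\, e_{\sigma \setminus i},
\]
where $m_\tau = \operatorname{lcm}\{m_j : j \in \tau\}$. A standard criterion (see, for instance, Herzog--Hibi, \emph{Monomial Ideals}) states that this complex is minimal exactly when no matrix entry is a unit, i.e.\ when $m_\sigma \neq m_{\sigma \setminus i}$ for every $\sigma$ and every $i \in \sigma$. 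The existence of the Taylor resolution alone gives the upper bound $\operatorname{pd}_S(I(D)) \leq |E(D)| - 1$; minimality will upgrade this to the equality claimed.

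The main step is to verify the non-redundancy condition combinatorially. Fix an edge $i = uv \in E(D)$ (so the corresponding generator is $m_i = u v^{w_v}$) and any $\sigma \subseteq E(D)$ containing $i$. I will show that the power of $v$ appearing in $m_{\sigma \setminus i}$ is strictly less than $w_v$, which proves $v^{w_v} \nmid m_{\sigma \setminus i}$ and hence $m_\sigma \neq m_{\sigma \setminus i}$. The key structural input is that in a rooted forest (with all edges directed away from each root) every non-root vertex has a unique incoming edge; since $v$ has the incoming edge $i$, it is not a root, so no other edge of $D$ terminates at $v$. Consequently, for every generator $m_j = a\, b^{w_b}$ with $j \in \sigma \setminus i$ we have $b \neq v$, and $v$ can appear in $m_j$ only when $a = v$, contributing the single factor $v^{1}$. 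Hence the power of $v$ in $m_{\sigma \setminus i}$ is at most $1$, which is strictly less than $w_v$ by the hypothesis $w_v \geq 2$, as claimed.

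Putting the pieces together yields $\operatorname{pd}_S(S/I(D)) = |E(D)|$, equivalently $\operatorname{pd}_S(I(D)) = |E(D)| - 1$. There is no real obstacle in this argument: it rests entirely on the elementary fact that each vertex of a rooted forest has a unique incoming edge, together with the weight assumption $w_v \geq 2$, which is precisely what creates the gap between the exponent $w_v$ contributed by $v$'s incoming edge and the exponent at most $1$ contributed by any outgoing edges at $v$. If anything is delicate, it is only making sure that the roles of starting and ending vertex are not conflated when $v$ both receives an edge from $u$ and sends edges to its own children.
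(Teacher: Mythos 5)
Your proof is correct, and it takes a genuinely different and more direct route than the paper. The paper proves this by induction on $|E(D)|$: it reduces to a single rooted tree, polarizes $I(D)$, splits off the polarized generator of a leaf edge as $J$, invokes the Betti-splitting criterion of Francisco--H\`a--Van Tuyl, and identifies $J\cap K$ as $J\cdot L$ where $L$ is the polarized edge ideal of an auxiliary rooted forest with one fewer edge, so that the inductive hypothesis applies. You instead observe that the Taylor complex on the $|E(D)|$ generators is already minimal: for the generator $m_i=x_ux_v^{w_v}$ of an edge $uv$, the vertex $v$ has $uv$ as its unique incoming edge (all edges point away from the roots), so every other generator contains $x_v$ to power at most $1$, whence $x_v^{w_v}\nmid m_{\sigma\setminus i}$ and $m_\sigma\neq m_{\sigma\setminus i}$; this correctly uses $w_v\geq 2$ (and the hypothesis is genuinely needed, since for $w\equiv 1$ the Taylor complex of a path on four vertices is not minimal). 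Your argument needs no induction, no polarization, and no splitting machinery, and it yields strictly more, namely all total Betti numbers $\beta_i(I(D))=\binom{|E(D)|}{i+1}$; what it does not give for free is the regularity formula of the companion theorem, for which one would still have to control the degrees $\deg m_\sigma-|\sigma|$ over all subsets, whereas the paper's inductive Betti-splitting setup handles projective dimension and regularity uniformly. One small presentational point: state explicitly that you are using the paper's standing convention that rooted forests are oriented away from the roots, since for the towards-the-root orientation a vertex may have several incoming edges and the Taylor complex need not be minimal.
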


\begin{Theorem}
Let $D=(V(D),E(D),w)$ be a weighted rooted forest such that  $w(x)\geq 2$  for any vertex $x$. Then $\mbox{reg}\,(I(D))=\sum\limits_{x\in V(D)}w(x)-|E(D)|+1$.
\end{Theorem}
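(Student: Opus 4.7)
The plan is to induct on $|V(D)|$, with Theorem 1.1 (for weighted oriented stars) supplying the base case. For the inductive step I fix a leaf $v$ of $D$ that is a sink---the case of a root-leaf is symmetric---and let $u$ be its unique neighbor, so the edge $u\to v$ contributes $x_u x_v^{w_v}$ to $I(D)$.

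Setting $J=(x_u x_v^{w_v})$ and $K=I(D\setminus\{v\})$, the fact that $x_v\notin\supp(K)$ makes $I(D)=J+K$ a Betti splitting (by the variable-splitting criterion of Francisco--H\`a--Van Tuyl), so
\[
\reg I(D)=\max\{\reg J,\ \reg K,\ \reg(J\cap K)-1\}.
\]
The first term equals $1+w_v$; the second, by the induction hypothesis applied to the smaller weighted rooted forest $D\setminus\{v\}$, equals $\sum_{x\neq v}w(x)-(|E(D)|-1)+1$.

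For the third term, a direct monomial computation yields $J\cap K=x_v^{w_v}\cdot x_u\cdot(K:x_u)$, and hence $\reg(J\cap K)=w_v+1+\reg(K:x_u)$. The colon ideal decomposes as $(K:x_u)=\tilde K_u+I(D\setminus\{u,v\})$, a sum of a ``local'' piece at $u$ (containing $x_p x_u^{w_u-1}$ if $u$ has parent $p$ in $D$ and $x_b^{w_b}$ for each remaining child $b\neq v$ of $u$) and a ``global'' piece on the disjoint rest of the forest. Using the short exact sequence
\[
0\to S/(K:x_u)(-1)\to S/K\to S/(K,x_u)\to 0
\]
together with $(K,x_u)=(x_u)+I(D\setminus\{u,v\})$ and the induction hypothesis applied to $D\setminus\{u,v\}$, one derives the key identity $\reg(K:x_u)=\reg K-1$.

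Combining the three pieces gives $\reg(J\cap K)-1=w_v+\reg K-1=\sum_{x\in V(D)}w(x)-|E(D)|+1$, which strictly exceeds both $\reg J=1+w_v$ and $\reg K$, so the maximum in the Betti splitting formula equals the claimed value. The main obstacle is the identity $\reg(K:x_u)=\reg K-1$: the nontrivial direction is the upper bound, which via the short exact sequence reduces to proving $\reg((K,x_u))\le\reg K-1$, and this in turn requires a careful case analysis at $u$ (root versus non-root; zero, one, or several remaining children in $D\setminus\{v\}$). The weight hypothesis $w(u)\ge 2$ is essential here: it prevents the generator $x_p x_u^{w_u-1}$ of $\tilde K_u$ from collapsing and thereby keeps the inductive formula intact.
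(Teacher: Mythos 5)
Your skeleton---induction with the star case as base, splitting off the leaf edge $x_ux_v^{w_v}$ as $J$ with the rest as $K$, and invoking the Francisco--H\`a--Van Tuyl criterion (valid here since $J$ is principal and contains every generator divisible by $x_v$)---is the same as the paper's. The genuine divergence is in how $\reg(J\cap K)$ is handled. The paper polarizes first, writes $J\cap K=J\cdot L$ with $L$ the colon of the polarized $K$ by the polarized leaf generator, identifies $L$ as the polarization of the edge ideal of an explicit smaller rooted forest $H$, and applies the induction hypothesis to $H$ directly; polarization is what makes the supports of $J$ and $L$ disjoint so that Lemma \ref{lem2} applies. You stay unpolarized and instead route everything through the identity $\reg(K:x_u)=\reg K-1$, to be extracted from the exact sequence $0\to (S/(K:x_u))(-1)\to S/K\to S/(K,x_u)\to 0$. (A minor point: Lemma \ref{lem2}(3) does not literally apply to $J\cap K=x_ux_v^{w_v}(K:x_u)$, since $x_u\in\supp(K:x_u)$ when $u$ has a parent and $w_u\ge 2$; you need the general fact that multiplication by a monomial is a degree-shifted isomorphism.)

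The gap is that the one identity carrying all the weight, $\reg(K:x_u)=\reg K-1$, is asserted rather than proved, and the reduction you propose does not close. Granting the exact sequence, both directions of the identity would follow from $\reg((K,x_u))\le\reg K-1$; but this is not a routine case analysis. From $(K,x_u)=(x_u)+I(D\setminus\{u,v\})$ and Lemma \ref{lem1}(2), one gets $\reg((K,x_u))=\reg I(D\setminus\{u,v\})$, and computing both sides from the induction hypothesis gives $\reg I(D\setminus\{u,v\})=\reg K-w_u+d$, where $d$ is the number of edges at $u$ in $D\setminus\{v\}$ (deleting $u$ removes $w_u$ from the weight sum but also removes $d$ edges from the count). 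So the inequality you need fails whenever $d\ge w_u$, for instance $w_u=2$ with $u$ having a parent and two children besides $v$. The source of the discrepancy is that deleting $u$ turns each of its children into a source, and the formula $\sum_{x}w(x)-|E|+1$ charges a source its full weight even though that weight never enters the edge ideal; this is the same over-count that already makes the stated formula fail for a single edge $x_1\to x_2$ with $w_1\ge 2$, where $\reg\bigl((x_1x_2^{w_2})\bigr)=w_2+1$ rather than $w_1+w_2$. In short, the step you defer is exactly where the difficulty lives, and carrying it out honestly shows that the displayed formula, with all weights $\ge 2$ counted over all of $V(D)$, cannot serve as the induction hypothesis.
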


\begin{Theorem}
Let $D=(V(D),E(D),w)$ be a weighted oriented  cycle such that  $w(x)\geq 2$   for any  vertex $x$. Then
$$\mbox{pd}\,(I(D))=|E(D)|-1 \ \ \mbox{and}\ \
 \mbox{reg}\,(I(D))=\sum\limits_{x\in V(D)}w(x)-|E(D)|+1.$$
\end{Theorem}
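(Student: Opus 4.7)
My plan is to apply a Betti splitting in the spirit of Francisco--H\`a--Van Tuyl to peel one edge off the cycle and reduce to the rooted-path case handled by Theorems 1.2 and 1.3. After a cyclic relabelling I may assume $V(D)=\{x_1,\ldots,x_n\}$ and $E(D)=\{x_1x_2,\,x_2x_3,\,\ldots,\,x_{n-1}x_n,\,x_nx_1\}$. Setting $I=I(D)$, $I_2=(x_nx_1^{w_1})$ and $I_1=(x_1x_2^{w_2},\,x_2x_3^{w_3},\,\ldots,\,x_{n-1}x_n^{w_n})$, one has $I=I_1+I_2$ and the minimal generating sets split disjointly. The ideal $I_1$ is precisely the edge ideal of the weighted rooted path $x_1\to x_2\to\cdots\to x_n$, so Theorems 1.2 and 1.3 supply $\mathrm{pd}(I_1)=n-2$ together with a fixed value for $\mathrm{reg}(I_1)$ that will turn out to be dominated by the intersection contribution; the principal ideal $I_2$ contributes $\mathrm{pd}(I_2)=0$ and $\mathrm{reg}(I_2)=w_1+1$.

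Next I would verify that $I=I_1+I_2$ is a Betti splitting (a standard check, made easier by $I_2$ being principal). Once granted, the Francisco--H\`a--Van Tuyl identity $\beta_{i,j}(I)=\beta_{i,j}(I_1)+\beta_{i,j}(I_2)+\beta_{i-1,j}(I_1\cap I_2)$ collapses to
\[\mathrm{pd}(I)=\max\{\mathrm{pd}(I_1),\,\mathrm{pd}(I_2),\,\mathrm{pd}(I_1\cap I_2)+1\},\quad \mathrm{reg}(I)=\max\{\mathrm{reg}(I_1),\,\mathrm{reg}(I_2),\,\mathrm{reg}(I_1\cap I_2)-1\},\]
and the whole problem reduces to understanding the intersection. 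Taking least common multiples of $x_nx_1^{w_1}$ with each generator of $I_1$ yields
\[I_1\cap I_2\;=\;x_1^{w_1}x_n\cdot K,\quad K=(x_2^{w_2},\,x_2x_3^{w_3},\,\ldots,\,x_{n-2}x_{n-1}^{w_{n-1}},\,x_{n-1}x_n^{w_n-1}).\]
Since $x_1$ does not appear in $K$ and $x_n$ is a nonzerodivisor on $S$, multiplication by $x_1^{w_1}x_n$ is an injective homogeneous $S$-map of degree $w_1+1$ onto $I_1\cap I_2$, producing the graded isomorphism $x_1^{w_1}x_n\cdot K\cong K(-w_1-1)$. Consequently $\mathrm{pd}(I_1\cap I_2)=\mathrm{pd}(K)$ and $\mathrm{reg}(I_1\cap I_2)=\mathrm{reg}(K)+w_1+1$.

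The target formulas for the cycle are then equivalent to the predictions $\mathrm{pd}(K)=n-2$ and $\mathrm{reg}(K)=\sum_{i=2}^{n}w_i-n+1$. I would extract these by a further Betti splitting $K=(x_2^{w_2})+L$, where $L=(x_2x_3^{w_3},\ldots,x_{n-1}x_n^{w_n-1})$ is almost the edge ideal of the rooted path $x_2\to x_3\to\cdots\to x_n$; Theorems 1.2 and 1.3 then apply to $L$, and the intersection $(x_2^{w_2})\cap L$ factors cleanly by the same lcm trick, letting the whole induction run. Feeding $\mathrm{pd}(K)$ and $\mathrm{reg}(K)$ back into the Betti splitting identities gives $\mathrm{pd}(I)=n-1=|E(D)|-1$ and $\mathrm{reg}(I)=\sum_{i=1}^{n}w_i-n+1$, completing the proof. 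The main obstacle is the boundary case $w_n=2$, in which the reduced exponent $w_n-1=1$ means $L$ carries a leaf of weight one and Theorems 1.2 and 1.3 do not apply verbatim; the cleanest remedy is to polarize $I(D)$ at the outset, so that projective dimension and regularity are preserved while the entire argument takes place in the squarefree setting, where the uniform weight hypothesis on intermediate ideals can be dispensed with.
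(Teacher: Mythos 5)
Your overall strategy --- peel off the closing edge $x_nx_1^{w_1}$ by a Betti splitting, compute the intersection by lcm's as a monomial times a smaller ideal, and recurse --- is exactly the strategy of the paper's proof, which works in the polarized ring with the chain of ideals $L_i=(\prod_{j=1}^{w_i}x_{ij},\,x_{i1}\prod_{j=1}^{w_{i+1}}x_{i+1,j},\ldots,x_{n-1,1}\prod_{j=2}^{w_n}x_{nj})$ and establishes $\mathrm{pd}(L_i)=n-i$ and $\mathrm{reg}(L_i)=\sum_{j=i}^{n}w_j-(n-i+1)$ by descending recursion; your $K$ and its successors are precisely the unpolarized versions of these $L_i$. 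Your observation that $uK\cong K(-\deg u)$ without any support-disjointness hypothesis is a clean way to handle the intersection. However, two steps of your sketch do not go through as written, and both are repaired only by the polarization you relegate to a closing remark --- for reasons different from the one you give there.

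First, the Betti-splitting verification: the only criterion at hand (Theorem 2.2, i.e.\ Francisco--H\`a--Van Tuyl) requires the split-off piece to contain \emph{all} generators of $I$ divisible by some fixed variable. For the unpolarized splitting $I=I_1+(x_nx_1^{w_1})$ no such variable exists, since $x_1$ also divides $x_1x_2^{w_2}$ and $x_n$ also divides $x_{n-1}x_n^{w_n}$; being principal does not by itself make a splitting a Betti splitting. After polarization the split-off generator is $x_{n1}x_{11}\cdots x_{1w_1}$ and the variable $x_{12}$ (which exists because $w_1\geq 2$ --- this is where the weight hypothesis actually enters) divides no other generator, so the criterion applies; the same remark concerns your second splitting $K=(x_2^{w_2})+L$. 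Second, the reduction to Theorems 1.2/1.3 does not work for the intermediate ideals: $K=(x_2^{w_2},x_2x_3^{w_3},\ldots,x_{n-1}x_n^{w_n-1})$ is not the edge ideal of a rooted forest because of the pure-power generator $x_2^{w_2}$, and the recursion $(x_2^{w_2})\cap L=x_2^{w_2}(x_3^{w_3},x_3x_4^{w_4},\ldots,x_{n-1}x_n^{w_n-1})$ reproduces an ideal of the same non-forest shape. So the induction must be carried out on this specific family directly (this is what the paper's chain $L_i$ does); polarizing does not convert these ideals into ones covered by the forest theorems, and the difficulty is independent of whether $w_n-1=1$. Once you polarize at the outset and run the induction on the $L_i$ themselves, your argument coincides with the paper's.
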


\medskip
For all unexplained terminology and additional information, we refer to \cite{JG} (for the theory
of digraphs), \cite{H} (for graph theory), and \cite{HH} (for the theory of edge ideals of graphs and
monomial ideals).

\medskip

\section{Preliminaries }

In this section, we gather together the needed  definitions and basic facts, which will
be used throughout this paper. However, for more details, we refer the reader to \cite{HH,FHT,HT1,Z1}.

\medskip
For any homogeneous ideal $I$ of the polynomial ring  $S=k[x_{1},\dots,x_{n}]$, there exists a {\em graded
minimal finite free resolution}

\vspace{3mm}
$0\rightarrow \bigoplus\limits_{j}S(-j)^{\beta_{p,j}(M)}\rightarrow \bigoplus\limits_{j}S(-j)^{\beta_{p-1,j}(M)}\rightarrow \cdots\rightarrow \bigoplus\limits_{j}S(-j)^{\beta_{0,j}(M)}\rightarrow I\rightarrow 0,$
where the maps are exact, $p\leq n$, and $S(-j)$ is the $S$-module obtained by shifting
the degrees of $S$ by $j$. The number
$\beta_{i,j}(I)$, the $(i,j)$-th graded Betti number of $I$,  is
an invariant of $I$ that equals the number of minimal generators of degree $j$ in the
$i$th syzygy module of $I$.
Of particular interest are the following invariants which measure the ¡°size¡± of the minimal graded
free resolution of $I$.
The projective dimension of $I$, denoted pd\,$(I)$, is defined to be
$$\mbox{pd}\,(I):=\mbox{max}\{i\ |\ \beta_{i,j}(I)\neq 0\}.$$
The regularity of $I$, denoted $\mbox{reg}\,(I)$, is defined by
$$\mbox{reg}\,(I):=\mbox{max}\{j-i\ |\ \beta_{i,j}(I)\neq 0\}.$$

\vspace{3mm}We now derive some formulas for $\mbox{pd}\,(I)$  and $\mbox{reg}\,(I)$ in some special cases by using some
tools developed in \cite{FHT}.

\begin{Definition} \label{bettispliting}Let $I$  be a monomial ideal, and suppose that there exist  monomial
ideals $J$ and $K$ such that $\mathcal{G}(I)$ is the disjoint union of $\mathcal{G}(J)$ and $\mathcal{G}(K)$. Then $I=J+K$
is a {\em Betti splitting} if
$$\beta_{i,j}(I)=\beta_{i,j}(J)+\beta_{i,j}(K)+\beta_{i-1,j}(J\cap K)\hspace{2mm}\mbox{for all}\hspace{2mm}i,j\geq 0,$$
where $\beta_{i-1,j}(J\cap K)=0\hspace{2mm}  \mbox{if}\hspace{2mm} i=0$.
\end{Definition}

\vspace{3mm}This formula was first obtained for the total Betti numbers by
Eliahou and Kervaire \cite{EK} and extended to the graded case by Fatabbi \cite{F}.
In  \cite{FHT}, the authors describe some sufficient conditions for an
ideal $I$ to have a Betti splitting. We shall require the following such condition.

\begin{Theorem}\label{Thm1}(\cite[Corollary 2.7]{FHT}).
Suppose that $I=J+K$ where $\mathcal{G}(J)$ contains all
the generators of $I$ divisible by some variable $x_{i}$ and $\mathcal{G}(K)$ is a nonempty set containing
the remaining generators of $I$. If $J$ has a linear resolution, then $I=J+K$ is a Betti
splitting.
\end{Theorem}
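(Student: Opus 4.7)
The plan is to invoke the Mayer--Vietoris characterization of Betti splittings and then verify its hypothesis through two rather different arguments for the two inclusion-induced maps involved. From the short exact sequence $0 \to J \cap K \to J \oplus K \to I \to 0$, applying $\Tor^S_\bullet(-, k)$ produces the long exact sequence
$$\cdots \to \Tor_i(J \cap K, k) \xrightarrow{\psi_i} \Tor_i(J, k) \oplus \Tor_i(K, k) \to \Tor_i(I, k) \xrightarrow{\delta_i} \Tor_{i-1}(J \cap K, k) \to \cdots,$$
in which $\psi_i$ is the diagonal map (with a sign) induced by the two inclusions. A standard argument turns the Betti splitting formula of Definition~\ref{bettispliting} into the requirement $\psi_i = 0$ for every $i$, equivalently the vanishing of both inclusion-induced maps $\iota_J^*\colon\Tor_i(J\cap K,k)\to \Tor_i(J,k)$ and $\iota_K^*\colon\Tor_i(J\cap K,k)\to \Tor_i(K,k)$ for every $i\geq 0$.

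I would first dispose of $\iota_K^*$, which uses no linearity assumption. Since every generator of $J$ is divisible by $x_i$, we have $J\subseteq (x_i)$, and so $J\cap K\subseteq (x_i)\cap K$. Because no generator of $K$ is divisible by $x_i$, a routine monomial argument yields $(x_i)\cap K = x_iK$. Thus the inclusion $J\cap K\hookrightarrow K$ factors as
$$J\cap K\hookrightarrow x_iK\hookrightarrow K,$$
and the second inclusion, precomposed with the isomorphism $K(-1)\xrightarrow{\cdot x_i}x_iK$, is exactly the multiplication-by-$x_i$ map $K(-1)\to K$. Passing to $\Tor_i(-,k)$ turns this into multiplication by $x_i$ on the $k$-vector space $\Tor_i(K,k)$, which vanishes because $x_i\in \mm$ annihilates every $k$-module. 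Hence $\iota_K^*=0$.

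For $\iota_J^*$ I would run a degree argument exploiting the linear resolution of $J$. Let $d$ be the common degree of the minimal generators of $J$; linearity gives $\Tor_i(J,k)_j=0$ unless $j=d+i$. Next, every minimal generator of $J\cap K$ has the form $\lcm(u,v)$ with $u\in\mathcal{G}(J)$ and $v\in\mathcal{G}(K)$; since $u$ and $v$ are distinct minimal generators of $I=J+K$, neither divides the other, so in particular $v\nmid u$ forces $\lcm(u,v)$ to be a strict multiple of $u$ and $\deg\lcm(u,v)\geq d+1$. The standard fact that a minimal free resolution raises internal degrees by at least one at each homological step then gives $\Tor_i(J\cap K,k)_j=0$ for $j\leq d+i$. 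In every graded piece either the source or the target of $\iota_J^*$ is zero, so $\iota_J^*=0$.

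With both $\iota_J^*$ and $\iota_K^*$ vanishing, $\psi_i=0$ for all $i$ and the long exact sequence breaks into the short exact sequences
$$0\to \Tor_i(J,k)\oplus \Tor_i(K,k)\to \Tor_i(I,k)\to \Tor_{i-1}(J\cap K,k)\to 0,$$
from which the Betti splitting identity follows on taking graded dimensions. The main subtlety, and where I would focus the most care, is the vanishing of $\iota_K^*$: the linear resolution hypothesis says nothing directly about $K$, and one is forced to exploit the structural containment $J\cap K\subseteq x_iK$ together with the annihilation of $\Tor_\bullet(K,k)$ by the ambient maximal ideal to secure this half of the criterion.
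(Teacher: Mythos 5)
The paper does not actually prove Theorem~\ref{Thm1}; it quotes it from \cite[Corollary 2.7]{FHT} without proof. Your argument is correct and is essentially the proof given in that reference: reducing the Betti splitting condition to the vanishing of the two inclusion-induced maps on $\Tor$, killing the map into $\Tor_i(K,k)$ via the factorization $J\cap K\subseteq x_iK$ and the fact that $x_i\in\mm$ annihilates $\Tor_i(K,k)$, and killing the map into $\Tor_i(J,k)$ by the degree comparison forced by the linear resolution of $J$ against the strict degree increase of the generators of $J\cap K$ is exactly the Francisco--H\`a--Van Tuyl argument.
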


\medskip
When $I$ is a Betti
splitting ideal,  Definition \ref{bettispliting} implies the following results:
\begin{Corollary} \label{cor1}
If $I=J+K$ is a Betti splitting ideal, then
\begin{itemize}
 \item[(1)]$\mbox{reg}\,(I)=\mbox{max}\{\mbox{reg}\,(J),\mbox{reg}\,(K),\mbox{reg}\,(J\cap K)-1\}$,
 \item[(2)] $\mbox{pd}\,(I)=\mbox{max}\{\mbox{pd}\,(J),\mbox{pd}\,(K),\mbox{pd}\,(J\cap K)+1\}$.
\end{itemize}
\end{Corollary}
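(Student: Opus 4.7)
The plan is to derive Corollary \ref{cor1} directly from the Betti-splitting identity in Definition \ref{bettispliting}, exploiting only non-negativity of graded Betti numbers. The starting point is the formula $\beta_{i,j}(I)=\beta_{i,j}(J)+\beta_{i,j}(K)+\beta_{i-1,j}(J\cap K)$ for all $i,j\geq 0$ (with $\beta_{-1,j}(J\cap K)=0$), together with the definitions $\reg(I)=\max\{j-i : \beta_{i,j}(I)\neq 0\}$ and $\mbox{pd}(I)=\max\{i : \beta_{i,j}(I)\neq 0 \text{ for some } j\}$.

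First I would observe that since all three summands on the right-hand side are non-negative integers, $\beta_{i,j}(I)\neq 0$ if and only if at least one of $\beta_{i,j}(J)$, $\beta_{i,j}(K)$, $\beta_{i-1,j}(J\cap K)$ is non-zero. Hence the set of pairs $(i,j)$ contributing to the Betti diagram of $I$ is the union of the corresponding index sets for $J$, $K$, and a shifted version coming from $J\cap K$.

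For part (1), I would take the maximum of $j-i$ over this union. The contribution from $J$ gives $\reg(J)$, the contribution from $K$ gives $\reg(K)$, and the contribution from $J\cap K$ requires the reindexing substitution $i'=i-1$: whenever $\beta_{i-1,j}(J\cap K)\neq 0$, one has $j-i=(j-i')-1$, so taking the maximum over such pairs yields $\reg(J\cap K)-1$. Combining these three gives the stated formula. For part (2), the argument is the same with $i$ in place of $j-i$: the shift $i=i'+1$ converts $\mbox{pd}(J\cap K)$ into a contribution of $\mbox{pd}(J\cap K)+1$ to $\mbox{pd}(I)$, while the $J$ and $K$ summands contribute $\mbox{pd}(J)$ and $\mbox{pd}(K)$ respectively.

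There is essentially no obstacle here beyond bookkeeping, because the Betti-splitting equation separates each Betti number of $I$ into a sum of non-negative terms, and both $\reg$ and $\mbox{pd}$ are defined as maxima over the support of the Betti table. The only minor subtlety is the edge case $i=0$: the convention $\beta_{-1,j}(J\cap K)=0$ ensures the $J\cap K$-piece contributes no spurious terms when $i=0$, so the shifted supremum $\mbox{pd}(J\cap K)+1$ and $\reg(J\cap K)-1$ are taken correctly. After dispensing with this, both equalities follow at once from the definition.
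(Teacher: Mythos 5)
Your argument is correct and is exactly the bookkeeping that the paper leaves implicit: the paper gives no proof of Corollary \ref{cor1}, merely asserting that it follows from Definition \ref{bettispliting}, and your use of non-negativity of Betti numbers to identify the support of the Betti table of $I$ with the union of the supports of $J$, $K$, and the shifted support of $J\cap K$ is the intended justification. The treatment of the $i=0$ edge case via the convention $\beta_{-1,j}(J\cap K)=0$ is also handled correctly.
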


\medskip
We need the following Lemmas:
\begin{Lemma}
\label{lem1}(\cite[Lemma 3.1]{Z1})
Let $S_{1}=k[x_{1},\dots,x_{m}]$ and $S_{2}=k[x_{m+1},\dots,x_{n}]$ be two polynomial rings, $I\subseteq S_{1}$ and
$J\subseteq S_{2}$ be two nonzero homogeneous  ideals. Then
\begin{itemize}
\item[(1)]$\mbox{pd}\,(I+J)=\mbox{pd}\,(I)+\mbox{pd}\,(J)+1$,
\item[(2)]$\mbox{reg}\,(I+J)=\mbox{reg}\,(I)+\mbox{reg}\,(J)-1$,
\item[(3)]$\mbox{reg}\,(IJ)=\mbox{reg}\,(I)+\mbox{reg}\,(J)$.
\end{itemize}
\end{Lemma}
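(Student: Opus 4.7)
The plan is to leverage the disjointness of the variable sets: since the generators of $I$ lie in $S_1$ and those of $J$ lie in $S_2$, the ring $S$ decomposes as $S_1\otimes_k S_2$ and the two modules $S_1/I$ and $S_2/J$ are Tor-independent over $k$. This makes tensor products of minimal graded free resolutions behave well, and all three assertions will follow from a single K\"unneth-type calculation of graded Betti numbers.

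Concretely, I would let $F_\bullet\to S_1/I$ and $G_\bullet\to S_2/J$ be minimal graded free resolutions over $S_1$ and $S_2$ respectively. After extending scalars to $S$, the total complex of $F_\bullet\otimes_k G_\bullet$ is a graded free resolution of $(S_1/I)\otimes_k(S_2/J)\cong S/(I+J)$. Minimality survives because the entries of the differentials of $F_\bullet$ lie in $(x_1,\dots,x_m)$ and those of $G_\bullet$ lie in $(x_{m+1},\dots,x_n)$, hence in the graded maximal ideal of $S$. Consequently
$$\beta^S_{i,j}(S/(I+J))=\sum_{\substack{i_1+i_2=i\\ j_1+j_2=j}}\beta^{S_1}_{i_1,j_1}(S_1/I)\,\beta^{S_2}_{i_2,j_2}(S_2/J),$$
so $\mbox{pd}\,(S/(I+J))=\mbox{pd}\,(S_1/I)+\mbox{pd}\,(S_2/J)$ and $\mbox{reg}\,(S/(I+J))=\mbox{reg}\,(S_1/I)+\mbox{reg}\,(S_2/J)$. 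Translating via $\mbox{pd}\,(I)=\mbox{pd}\,(S/I)-1$ and $\mbox{reg}\,(I)=\mbox{reg}\,(S/I)+1$ immediately yields (1) and (2).

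For (3), I would verify that the multiplication map $I\otimes_k J\to IJ$ is an isomorphism of graded $S$-modules: it is $S$-linear and surjective, and injectivity follows because the composition $I\otimes_k J\hookrightarrow S_1\otimes_k S_2\stackrel{\sim}{\to}S$ is injective. Tensoring minimal graded free resolutions of $I$ over $S_1$ and of $J$ over $S_2$ thus produces a minimal graded free resolution of $IJ$ over $S$, and the very same K\"unneth identity for Betti numbers gives $\mbox{reg}\,(IJ)=\mbox{reg}\,(I)+\mbox{reg}\,(J)$.

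The main technical point is the preservation of minimality after base change: one must check that no entry of a tensor-product differential becomes a unit of $S$, and that $I\otimes_k J\to IJ$ has no kernel beyond what is forced by the individual ideals. Both rely crucially on the disjointness of the two variable sets, which ensures that $(x_1,\dots,x_m)$ and $(x_{m+1},\dots,x_n)$ together generate the graded maximal ideal of $S$ without unexpected relations. Once this K\"unneth framework is in place, the three claims are formal consequences of the decomposition of Betti numbers.
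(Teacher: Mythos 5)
Your argument is correct: the K\"unneth decomposition of graded Betti numbers for $S/(I+J)\cong (S_1/I)\otimes_k(S_2/J)$ and for $IJ\cong I\otimes_k J$, together with the shifts $\pd(I)=\pd(S/I)-1$ and $\reg(I)=\reg(S/I)+1$ for nonzero ideals, gives exactly the three stated formulas. The paper itself offers no proof here---it simply cites \cite[Lemma 3.1]{Z1}---and the tensor-product-of-minimal-resolutions argument you give is precisely the standard proof used in that source, so there is nothing to criticize beyond noting that the minimality and injectivity checks you flag do go through because the variable sets are disjoint and everything is free over the field $k$.
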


\medskip
 Let $\mathcal{G}(I)$ denote the minimal set of generators of a monomial ideal $I\subset S=k[x_{1},\dots, x_{n}]$
 and let $u\in S$ be a monomial, we set $\mbox{supp}(u)=\{x_i: x_i|u\}$. If $\mathcal{G}(I)=\{u_1,\ldots,u_m\}$, we set $\mbox{supp}(I)=\bigcup\limits_{i=1}^{m}\mbox{supp}(u_i)$. The following lemma is well known.

\begin{Lemma}
\label{lem2} Let  $I, J=(u)$ be two monomial ideals  such that $\mbox{supp}(u)\cap \mbox{supp}(I)=\emptyset$, where $u$  is a monomial of degree $m$. Then
\begin{itemize}
\item[(1)] $\mbox{reg}\,(J)=m$,
\item[(2)]$\mbox{pd}\,(uI)=\mbox{pd}\,(I)$,
\item[(3)]$\mbox{reg}\,(uI)=\mbox{reg}\,(I)+m$.
\end{itemize}
\end{Lemma}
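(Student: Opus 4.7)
The plan is to handle part (1) separately and to derive parts (2) and (3) together from a single structural observation about multiplication by $u$.

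For part (1), I would note that $J=(u)$, being generated by one monomial of degree $m$, admits the graded minimal free resolution $0 \to S(-m) \to J \to 0$. Hence $\beta_{0,m}(J)=1$ is the only nonzero graded Betti number, and by definition $\reg(J) = m-0 = m$.

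For parts (2) and (3), the key observation is that multiplication by $u$ gives a graded $S$-module homomorphism $\mu_u : I(-m) \to uI$, which is surjective by the definition of $uI$ and injective because $u$ is a nonzerodivisor in the domain $S$. This yields $uI \iso I(-m)$ as graded $S$-modules, so applying the internal degree shift $(-m)$ to a minimal graded free resolution of $I$ produces a minimal graded free resolution of $uI$; consequently
$$\beta_{i,j}(uI) = \beta_{i,\,j-m}(I) \quad \text{for all } i,j \geq 0.$$
The hypothesis $\supp(u)\cap\supp(I)=\emptyset$ would be used, if at all, only to identify the minimal monomial generators of $uI$ as $\{u\cdot g \mid g\in\mathcal{G}(I)\}$ and so make the generating degrees transparent; the isomorphism itself does not rely on it.

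From the displayed identity I would then read off that the largest homological index supporting a nonzero Betti number is unchanged, giving $\pd(uI)=\pd(I)$, and that $\reg(uI) = \max\{j-i : \beta_{i,j-m}(I)\neq 0\} = m + \reg(I)$. No step presents a real obstacle; the argument is standard bookkeeping about graded Betti numbers under a degree shift, which is presumably why the authors call the lemma ``well known''.
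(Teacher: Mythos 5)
Your proof is correct; the paper offers no proof of this lemma at all (it is simply declared ``well known''), so there is nothing to compare against, and your write-up supplies the standard argument: part (1) from the one-step resolution $0\to S(-m)\to J\to 0$, and parts (2) and (3) from the graded isomorphism $uI\cong I(-m)$ given by multiplication by the nonzerodivisor $u$, which shifts every graded Betti number by $m$ in internal degree. Your side remark is also accurate: the hypothesis $\mbox{supp}(u)\cap \mbox{supp}(I)=\emptyset$ is not needed for the isomorphism (nor, in fact, for identifying the minimal generators of $uI$, since $ug_i\mid ug_j$ if and only if $g_i\mid g_j$).
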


\medskip
\begin{Definition} \label{polarization}
Suppose that $u=x_1^{a_1}\cdots x_n^{a_n}$ is a monomial in $S$. Then we define the {\it polarization} of $u$ to be
the squarefree monomial $$\mathcal{P}(u)=x_{11}x_{12}\cdots x_{1a_1} x_{21}\cdots x_{2a_2}\cdots x_{n1}\cdots x_{na_n}$$
in the polynomial ring $S^{\mathcal{P}}=k[x_{ij}\mid 1\leq i\leq n, 1\leq j\leq a_i]$.
If $I\subset S$ is a monomial ideal with $\mathcal{G}(I)=\{u_1,\ldots,u_m\}$,  the  {\it polarization}
of $I$,  denoted by $I^{\mathcal{P}}$, is defined as:
$$I^{\mathcal{P}}=(\mathcal{P}(u_1),\ldots,\mathcal{P}(u_m)),$$
which is a squarefree monomial ideal in a polynomial ring $S^{\mathcal{P}}$.
\end{Definition}

\medskip
Here is two examples of how polarization works.
\begin{Example} \label{example1} Let $I=(x_1^{2}x_2^{3},x_2^{4}x_3,x_3x_4^{2},x_4^{2}x_5)$ be a monomial ideal, then the polarization of $I$ is the ideal  $I^{\mathcal{P}}=(x_{11}x_{12}x_{21}x_{22}x_{23},x_{21}x_{22}x_{23}x_{24}x_{31},
x_{31}x_{41}x_{42},x_{41}x_{42}x_{51})$.
\end{Example}

\begin{Example} \label{example2} Let $I(D)=(x_1x_2^{3},x_2x_3,x_3x_4^{2},x_4x_5^{5})$ be the edge ideal of a weighted rooted tree $D$, then the polarization of $I(D)$ is the ideal  $I(D)^{\mathcal{P}}=(x_{11}x_{21}x_{22}x_{23},x_{21}x_{31},\\
x_{31}x_{41}x_{42},x_{41}x_{51}x_{52}x_{53}x_{54}x_{55})$.
\end{Example}

\medskip
A monomial ideal $I$ and its polarization $I^{\mathcal{P}}$ share many homological and
algebraic properties. Thus, by polarization, many questions concerning monomial
ideals can be reduced to squarefree monomial ideals. The following is a very useful property of polarization which play an essential role
throughout the paper.

\begin{Lemma}
\label{lem3}(\cite[Corollary 1.6.3]{HH}) Let $I\subset S$ be a monomial ideal and $I^{\mathcal{P}}\subset S^{\mathcal{P}}$ its polarization.
Then
\begin{itemize}
\item[(1)] $\beta_{ij}(I)=\beta_{ij}(I^{\mathcal{P}})$ for all $i$ and $j$,
\item[(2)] $\mbox{pd}\,(I)=\mbox{pd}\,(I^{\mathcal{P}})$ and $\mbox{reg}\,(I)=\mbox{reg}\,(I^{\mathcal{P}})$.
\end{itemize}
\end{Lemma}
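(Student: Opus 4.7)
The plan is to establish Lemma~\ref{lem3} via the classical ``linear regular sequence'' argument. I would produce an explicit sequence $\Theta$ of linear forms in the enlarged polynomial ring $S^{\mathcal{P}}$ that is regular on $S^{\mathcal{P}}/I^{\mathcal{P}}$ and whose quotient recovers $S/I$, and then invoke the general fact that reducing by a regular sequence of linear forms preserves graded Betti numbers.

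Concretely, for each $i$ with $a_i \geq 2$ (where $a_i$ is the highest power of $x_i$ appearing in any minimal generator of $I$), introduce the linear forms $\theta_{ij} := x_{ij} - x_{i1}$ for $j = 2, \ldots, a_i$, and collect these into the set $\Theta$. The ring homomorphism $\varphi\colon S^{\mathcal{P}} \twoheadrightarrow S$ sending $x_{ij} \mapsto x_i$ has kernel $(\Theta)$, and $\varphi(I^{\mathcal{P}}) = I$, because $\mathcal{P}(x_1^{a_1}\cdots x_n^{a_n}) = \prod_i \prod_{j=1}^{a_i} x_{ij}$ maps to $\prod_i x_i^{a_i}$. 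Thus $S/I \cong S^{\mathcal{P}}/(I^{\mathcal{P}} + (\Theta))$, and the Betti numbers of $I$ can in principle be read off from those of $I^{\mathcal{P}}$ after this reduction.

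The genuine obstacle is showing that $\Theta$ is a regular sequence on $S^{\mathcal{P}}/I^{\mathcal{P}}$. I would attack this through the associated primes of the squarefree ideal $I^{\mathcal{P}}$, which are generated by subsets of variables. The crucial structural feature of polarization is that, in any generator $\mathcal{P}(u)$, the variables arising from the $i$-th block form an initial segment $x_{i1}, x_{i2}, \ldots, x_{i,a_i(u)}$. It then follows that no minimal prime of $I^{\mathcal{P}}$ contains two distinct variables from the same block: if both $x_{ij}$ and $x_{ij'}$ (with $j' < j$) lay in a minimal prime $\mathfrak{p}$, then $\mathfrak{p} \setminus \{x_{ij}\}$ would still meet every generator of $I^{\mathcal{P}}$ (any generator in which $x_{ij}$ appears also contains $x_{ij'}$), contradicting minimality. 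Hence no difference $\theta_{ij}$ lies in any associated prime, so each $\theta_{ij}$ is a nonzerodivisor on $S^{\mathcal{P}}/I^{\mathcal{P}}$; a careful induction that tracks the analogous initial-segment property in each intermediate quotient promotes this to the full sequence $\Theta$ being regular.

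Once this is in hand, iterating the short exact sequence
\[
0 \longrightarrow (S^{\mathcal{P}}/J)(-1) \xrightarrow{\ \theta\ } S^{\mathcal{P}}/J \longrightarrow S^{\mathcal{P}}/(J,\theta) \longrightarrow 0,
\]
with $J$ running over the intermediate quotients and $\theta$ over the elements of $\Theta$, yields $\beta_{i,j}(S/I) = \beta_{i,j}(S^{\mathcal{P}}/I^{\mathcal{P}})$ and hence $\beta_{i,j}(I) = \beta_{i,j}(I^{\mathcal{P}})$, proving~(1). Part~(2) is then immediate from~(1) and the definitions of $\mbox{pd}$ and $\mbox{reg}$ in terms of the bigraded Betti table. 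Since this is precisely the content of \cite[Corollary~1.6.3]{HH}, one may alternatively invoke that reference for the detailed verification.
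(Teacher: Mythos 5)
The paper does not actually prove this lemma---it only cites \cite[Corollary 1.6.3]{HH}---and your argument is precisely the standard proof from that reference: the variable differences $x_{ij}-x_{i1}$ form a regular sequence on $S^{\mathcal{P}}/I^{\mathcal{P}}$ (correctly justified via the initial-segment structure of the blocks, which forces every associated prime of the squarefree ideal $I^{\mathcal{P}}$ to contain at most one variable per block), and reduction modulo a linear regular sequence preserves graded Betti numbers, giving (1) and hence (2). The outline is sound; the only step needing the care you already flag is the induction showing each intermediate quotient is again (isomorphic to) a partial polarization, so the one-step nonzerodivisor argument can be iterated.
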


\medskip

\medskip

\section{Projective dimensions and  regularities of  edge ideals of rooted forests}

\hspace{3mm}In this section, by using the approach of a Betti splitting and  polarization, we will provide some  formulas for computing the projective dimensions and  regularities of   edge ideals of some weighted oriented star graphs and rooted forests. As some consequences, we  also give some exact formulas for the depths of edge ideals of oriented forests.

\begin{Theorem}\label{thm2}
Let $D(V(D), E(D),w)$ be a weighted oriented star graph
such that  $w(x)\geq 2$  for any vertex $x$. If $E(D)$ is one of the following cases $\{x_1x_2,x_1x_3\ldots,x_1x_n\}$, $\{x_2x_1,x_3x_1\ldots,x_nx_1\}$ and $\{x_1x_2,x_2x_3\ldots,x_2x_n\}$. Then
$$\mbox{pd}\,(I(D))=|E(D)|-1,\ \ \mbox{and}\ \
 \mbox{reg}\,(I(D))=\sum\limits_{x\in V(D)}w(x)-|E(D)|+1.$$
\end{Theorem}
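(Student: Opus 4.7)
The plan is to dispatch the three edge configurations one at a time. In the first two cases the edge ideal is already of the form (monomial)$\cdot$(complete intersection of pure powers of distinct variables), and the claimed $\pd$ and $\reg$ follow directly from the Koszul resolution together with Lemma \ref{lem2}. The third case needs one application of the Betti-splitting machinery (Theorem \ref{Thm1} and Corollary \ref{cor1}) to reduce to the same situation. Throughout, note that the weight of a source $x$ never enters $I(D)$---only ending-point weights appear in the monomials $x_i x_j^{w_j}$---so in the sum $\sum_{x\in V(D)} w(x)$ appearing in the regularity formula, each source should be read as contributing $1$.

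For $E = \{x_1 x_j : 2 \leq j \leq n\}$ we have $I(D) = x_1 \cdot L$ with $L = (x_2^{w_2},\ldots,x_n^{w_n})$. The Koszul resolution of $L$ has length $n-2$, giving $\pd(L) = n-2$, and the top Betti number (corresponding to the full subset) gives $\reg(L) = \sum_{i=2}^n w_i - n + 2$. Since $\supp(x_1)\cap\supp(L)=\emptyset$, Lemma \ref{lem2} transfers these to $I(D)$, boosting $\reg$ by $1$, and the claimed formulas drop out. The dual configuration $E=\{x_j x_1 : 2\le j\le n\}$ factors as $I(D)=x_1^{w_1}\cdot(x_2,\ldots,x_n)$; the squarefree factor has linear Koszul resolution of length $n-2$ and regularity $1$, so Lemma \ref{lem2} yields $\pd(I(D))=n-2$ and $\reg(I(D))=1+w_1$, matching the formulas.

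For the mixed configuration $E=\{x_1x_2\}\cup\{x_2 x_j:3\le j\le n\}$, I take $J=(x_1 x_2^{w_2})$ and $K=x_2\cdot(x_3^{w_3},\ldots,x_n^{w_n})$. Then $J$ collects the unique generator divisible by $x_1$ and, being principal, has a linear resolution, so $I(D)=J+K$ is a Betti splitting by Theorem \ref{Thm1}. A direct lcm-computation gives $J\cap K=x_1 x_2^{w_2}\cdot(x_3^{w_3},\ldots,x_n^{w_n})$; each of $J$, $K$, and $J\cap K$ has the form handled in the previous paragraph, and Lemma \ref{lem2} together with the Koszul numbers above computes $\pd$ and $\reg$ of all three explicitly. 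Corollary \ref{cor1} then yields $\pd(I(D))=n-2$ at once, while for $\reg(I(D))$ the three candidates to maximize are $1+w_2$, $\sum_{i=3}^n w_i -n+4$, and $\sum_{i=2}^n w_i - n + 3$. The only step beyond pure bookkeeping is checking that the last candidate dominates---a short inequality using $w_i\ge 2$ on every non-source vertex---and this is the one spot I expect any real work; once settled, the answer equals $\sum_x w(x)-|E(D)|+1$ under the source-weight convention above.
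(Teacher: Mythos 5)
Your proof is correct and follows essentially the same route as the paper: cases (1) and (2) reduce to a monomial times a complete intersection of pure powers (where you invoke the Koszul complex, the paper iterates Lemmas \ref{lem1} and \ref{lem2}, with identical numerical output), and case (3) uses the identical Betti splitting $J=(x_1x_2^{w_2})$, $K=x_2(x_3^{w_3},\ldots,x_n^{w_n})$ with $J\cap K=JL$ followed by Corollary \ref{cor1}. Your side remark that the stated regularity formula forces each source vertex to be counted with weight $1$ is a genuine catch rather than a quibble: the paper's own concluding identity in case (3), $\sum_{i=2}^{n}w_i-(n-3)=\sum_{i=1}^{n}w_i-(n-1)+1$, holds only when $w_1=1$ (and case (2) likewise needs $w_i=1$ for $i\geq 2$), which is in tension with the blanket hypothesis $w(x)\geq 2$, so the theorem should really be read with the source-weight-one convention you adopt.
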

\begin{proof} Let us assume that $V(D)=\{x_1,\ldots,x_n\}$ and $w_i=w(x_i)$ for any $x_i$.

(1) If $E(D)=\{x_1x_2,x_1x_3\ldots,x_1x_n\}$, then
$I(D)=(x_1x_2^{w_2},\ldots,x_1x_n^{w_n})$. The conclusions  follows from  Lemmas \ref{lem1} and \ref{lem2}.

(2) If $E(D)=\{x_2x_1,x_3x_1\ldots,x_nx_1\}$, then  by similar arguments as above, the desired follows.

(3) If $E(D)=\{x_1x_2,x_2x_3\ldots,x_2x_n\}$, then $$I(D)=(x_1x_2^{w_2},x_2x_3^{w_3}\ldots,x_2x_n^{w_n})=J+K,$$
 where $J=(x_1x_2^{w_2})$ has a linear resolution, $K=(x_2x_3^{w_3}\ldots,x_2x_n^{w_n})=x_2L$ and
 $J\cap K=JL$ where $L=(x_3^{w_3}\ldots,x_n^{w_n})$. Thus $\mbox{pd}\,(K)=\mbox{pd}\,(L)=n-3$,  $\mbox{reg}\,(J)=w_2+1$, $\mbox{reg}\,(L)=\sum\limits_{i=3}^{n}w_i-(n-3)$. by Lemmas \ref{lem1} and \ref{lem2}.
It follows that from   Corollary \ref{cor1}, Lemma \ref{lem2} \begin{eqnarray*}\mbox{pd}\,(I(D))&=&\mbox{max}\{\mbox{pd}\,(J),\mbox{pd}\,(K),\mbox{pd}\,(J\cap K)+1\}\\
&=&\mbox{max}\{0,n-3,n-3+1\}\\
&=&n-2=|E(D)|-1.
\end{eqnarray*}
and
\begin{eqnarray*}\mbox{reg}\,(I(D))&=&\mbox{max}\{\mbox{reg}\,(J),\mbox{reg}\,(K),\mbox{reg}\,(J\cap K)-1\}\\
&=&\mbox{max}\{w_2+1,\mbox{reg}\,(L)+1,\mbox{reg}\,(J)+\mbox{reg}\,(L)-1\}\\
&=&\mbox{reg}\,(J)+\mbox{reg}\,(L)-1=(w_2+1)+\sum\limits_{i=3}^{n}w_i-(n-3)-1\\
&=&\sum\limits_{i=2}^{n}w_i-(n-3)=\sum\limits_{i=1}^{n}w_i-(n-1)+1\\
&=&\sum\limits_{i=1}^{n}w_i-|E(D)|+1.
\end{eqnarray*}
\end{proof}

\vspace{3mm}As a consequence  of the above theorem, we have
\begin{Corollary} \label{cor2}
Let $D$ be aweighted oriented star graph  as in Theorem \ref{Thm2}. Then $$\mbox{depth}\,(I(D))=2.$$
\end{Corollary}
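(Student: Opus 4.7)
The plan is to deduce this immediately from Theorem \ref{thm2} via the Auslander--Buchsbaum formula, with no new combinatorial or homological input. Since $S = k[x_1,\dots,x_n]$ is a regular ring, the ideal $I(D)$ has finite projective dimension as an $S$-module, and I may apply Auslander--Buchsbaum to obtain
\[
\mbox{depth}\,(I(D)) \;=\; \mbox{depth}\,(S) - \mbox{pd}\,(I(D)) \;=\; n - \mbox{pd}\,(I(D)).
\]
Here I take the convention (consistent with Theorem \ref{thm2}, which computes $\mbox{pd}$ of the ideal itself rather than of $S/I$) that $\mbox{depth}\,(I(D))$ refers to the depth of $I(D)$ as an $S$-module.

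Next, I would read off the value of $\mbox{pd}\,(I(D))$ from Theorem \ref{thm2}: in each of the three listed orientations of the star, that theorem gives $\mbox{pd}\,(I(D)) = |E(D)| - 1$. Since a star on $n$ vertices has exactly $n-1$ edges, this yields $\mbox{pd}\,(I(D)) = n - 2$. Substituting into the Auslander--Buchsbaum equality above gives $\mbox{depth}\,(I(D)) = n - (n-2) = 2$, which is the claim.

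There is no genuine obstacle here; the only things to check are that the hypotheses of Theorem \ref{thm2} are still in force (they are, by the statement of the corollary) and the trivial vertex/edge count $|V(D)| - |E(D)| = 1$ for a star. If instead one prefers to phrase the argument via $S/I(D)$, one uses $\mbox{pd}\,(S/I(D)) = \mbox{pd}\,(I(D)) + 1 = n-1$, so $\mbox{depth}\,(S/I(D)) = 1$, and then $\mbox{depth}\,(I(D)) = \mbox{depth}\,(S/I(D)) + 1 = 2$; both routes give the same answer.
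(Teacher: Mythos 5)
Your argument is exactly the paper's: the authors also invoke the Auslander--Buchsbaum formula with $\mbox{depth}\,(S)=|E(D)|+1=n$ and substitute $\mbox{pd}\,(I(D))=|E(D)|-1$ from the preceding theorem to get $\mbox{depth}\,(I(D))=2$. Your proposal is correct and adds only the explicit (and harmless) remark about the alternative computation via $S/I(D)$.
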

\begin{proof} By Auslander-Buchsbaum formula (see Theorem 1.3.3 of \cite{BH}), it follows that
$$\mbox{depth}\,(I(D))=(|E(D)|+1)-\mbox{pd}\,(I(D))=2.$$
\end{proof}

\medskip Let  $D=(V(D), E(D),w)$ be a vertex-weighted oriented graph. For $T\subset V$, we define
the {\em induced vertex-weighted  subgraph} $H=(V(H), E(H),w)$ of $D$  to be the vertex-weighted oriented graph
such that $V(H)=T$, $uv\in E(H)$  if and only if $uv\in E(D)$  and   for any $u\in V(H)$, its  weight in $H$ equals to the weight of $u$ in $D$.

\medskip
We now state and prove two main theorems of this section.

\begin{Theorem}\label{Thm3}
Let $D=(V(D),E(D),w)$ be a weighted rooted forest such that  $w(x)\geq 2$   for any vertex $x$.
Then $\mbox{pd}\,(I(D))=|E(D)|-1$.
\end{Theorem}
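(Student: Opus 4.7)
The plan is to deduce the equality $\mbox{pd}\,(I(D))=|E(D)|-1$ directly from the top of the Taylor complex, bypassing induction. Write $m=|E(D)|$ and $g_e=x_ix_j^{w_j}$ for the minimal generator of $I(D)$ associated with the directed edge $e=x_ix_j$. Since the Taylor resolution of $S/I(D)$ has length $m$, the upper bound $\mbox{pd}\,(I(D))\le m-1$ is automatic; the content of the theorem is the matching lower bound.

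For the lower bound I invoke the standard multigraded fact that if $\sigma:=\lcm\{g_e:e\in E(D)\}$ and $\sigma_{e_0}:=\lcm\{g_e:e\ne e_0\}$ satisfy $\sigma_{e_0}\ne\sigma$ for every edge $e_0$, then $\beta_{m,\sigma}(S/I(D))=1$ and hence $\mbox{pd}\,(I(D))=m-1$. The reason is that the top Taylor differential $S(-\sigma)\to\bigoplus_{e}S(-\sigma_e)$ carries the entries $\pm\sigma/\sigma_e$; if each such quotient has positive degree, the differential vanishes after tensoring with the residue field, so the rank-one top summand $S(-\sigma)$ is not canceled during minimization in multidegree $\sigma$.

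Verifying the criterion is where both hypotheses, the rooted-forest structure and the weight condition $w(x)\ge 2$, come into play. Fix an edge $e_0=x_ix_j$, directed from the parent $x_i$ to the child $x_j$. In a rooted forest the non-root vertex $x_j$ has exactly one incoming edge, namely $e_0$ itself, so $g_{e_0}$ is the unique generator of $I(D)$ in which $x_j$ appears with exponent greater than $1$ (there it appears with exponent $w_j$); every other generator containing $x_j$ records an outgoing edge $x_jx_k$ and uses $x_j$ only with exponent $1$. Since $w_j\ge 2$, the exponent of $x_j$ in $\sigma_{e_0}$ is therefore at most $1<w_j$, so $\sigma_{e_0}\subsetneq\sigma$. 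Applying this to every $e_0\in E(D)$ gives the desired lower bound.

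The only external ingredient above is the top-of-Taylor criterion, which is just outside the paper's stated toolkit. If one prefers to stay strictly within that toolkit, one may instead induct on $|E(D)|$: reduce to a rooted tree via Lemma~\ref{lem1}(1), split off a pendant edge $x_px_\ell$ using Theorem~\ref{Thm1} with $J=(x_px_\ell^{w_\ell})$ and $K=I(D\setminus x_\ell)$, and compute $\mbox{pd}\,(J\cap K)$ by polarizing and peeling off the common factor $x_{p,1}x_{\ell,1}\cdots x_{\ell,w_\ell}$ via Lemma~\ref{lem2}(2); the resulting ideal is a smaller instance to which the inductive hypothesis applies. The main obstacle on that inductive route is identifying the leftover polarized ideal precisely enough to apply the hypothesis, an identification the Taylor-based argument sidesteps.
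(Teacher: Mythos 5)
Your argument is correct, and it takes a genuinely different route from the paper. The paper proves the theorem by induction on $|E(D)|$: it reduces to a rooted tree via Lemma~\ref{lem1}(1), takes the star graphs of Theorem~\ref{thm2} as the base case, and in the inductive step polarizes, splits off a pendant edge as a Betti splitting $I(D)^{\mathcal P}=J+K$ via Theorem~\ref{Thm1}, and---this is the delicate point---identifies $J\cap K=JL$ with $L$ the polarization of the edge ideal of an auxiliary rooted forest $H$ with $|E(D)|-1$ edges, so that the inductive hypothesis applies to both $K$ and $L$ before invoking Corollary~\ref{cor1}(2). Your Taylor-complex argument bypasses all of this: the upper bound $\mbox{pd}\,(I(D))\le |E(D)|-1$ is free, and the lower bound follows from the multigraded fact that the top Taylor syzygy survives minimization when no $(m-1)$-subset of the generators has the same least common multiple as the full set. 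Your verification of that condition is exactly right: with the paper's convention that edges are oriented away from the root, each non-root vertex $x_j$ is the head of exactly one edge, so $x_j$ appears with exponent $w_j\ge 2$ in exactly one generator and with exponent at most $1$ in every other, whence $\sigma_{e_0}$ strictly divides $\sigma$ for every $e_0$. What your route buys is brevity, no polarization, and the ability to treat disconnected forests in one stroke; what the paper's route buys is that the same splitting data ($J$, $K$, $L$ and the auxiliary forest $H$) is reused verbatim in the companion regularity computation for rooted forests, where a top-lcm argument alone would not suffice. The one ingredient you use that lies outside the paper's stated toolkit---that $\beta_{m,\sigma}(S/I)=1$ when $\sigma_{e_0}\ne\sigma$ for all $e_0$---is standard but should be cited or proved explicitly; it is immediate from the multidegree-$\sigma$ strand of the Taylor complex, whose term in homological degree $m-1$ vanishes under your hypothesis, so that $\Tor_m(S/I,k)_\sigma\cong k$.
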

\begin{proof} Let $D_1,\ldots,D_t$ be all  connected components of $D$, then for each $D_i$, it is a rooted tree.
Lemma \ref{lem1} (1) implies $$\mbox{pd}\,(I(D))=\mbox{pd}\,(\sum\limits_{i=1}^{t}I(D_i))=\sum\limits_{i=1}^{t}\mbox{pd}\,(I(D_i))+t-1.$$
It is enough to prove that $\mbox{pd}\,(I(D_i))=|E(D_i)|-1$ for  $i=1,\ldots,t$. Hence
we may assume that $D$ is a rooted tree with  the vertex set $V=\{x_1,\ldots, x_n\}$. We apply induction on  $|E(D)|$.

The case $D$ is a weighted oriented star graph follows from Theorem \ref{thm2}.
 Let $x_n$ be a leaf of $D$ which  adjacent to $x_{n-1}$ and $w_i=w(x_i)$ for any $i$, then
$$I(D)=(x_{n-1}x_n^{w_n})+I(D\setminus \{x_n\}),$$
where $D\setminus \{x_n\}$  is the subgraph of $D$ with vertex $x_n$ and edge $x_{n-1}x_n$ removed. Thus
$D\setminus \{x_n\}$ is a rooted subtree  with $|E(D)|-1$ edges. By the inductive hypothesis, we have $\mbox{pd}\,(I(D\setminus \{x_n\}))=|E(D\setminus \{x_n\})|-1=|E(D)|-2$.

Let  $I(D)^{\mathcal{P}}$ be the polarization of $I(D)$, then
$$
I(D)^{\mathcal{P}}=(x_{n-1,1}\prod\limits_{j=1}^{w_n} x_{nj})+I(D\setminus \{x_n\})^{\mathcal{P}}.
$$
Set $J=(x_{n-1,1}\prod\limits_{j=1}^{w_n} x_{nj})$ and $K=I(D\setminus \{x_n\})^{\mathcal{P}}$, then $J$ has a linear resolution, hence $I(D)^{\mathcal{P}}=J+K$ is a Betti splitting by Theorem
\ref{Thm1} and
\begin{eqnarray*}
J\cap K\!\!\!&=&\!\!\!(x_{n-1,1}\!\!\prod\limits_{j=1}^{w_n} \!x_{nj})((\!\!\prod\limits_{j=1}^{w_{m+1}}\!\!x_{m+1,j},\ldots,\!\!\prod\limits_{j=1}^{w_{n-2}} \!\!x_{n-2,j},x_{m1}\!\!\prod\limits_{j=2}^{w_{n-1}}\!\!x_{n-1,j})\!+\!I(D\setminus \{x_{n-1},x_n\})^{\mathcal{P}})\\
\!\!&=&\!\!\!J((\!\!\prod\limits_{j=1}^{w_{m+1}}\!\!x_{m+1,j},\ldots,\!\!\prod\limits_{j=1}^{w_{n-2}} \!\!x_{n-2,j},x_{m1}\!\!\prod\limits_{j=2}^{w_{n-1}}\!\!x_{n-1,j})+I(D\setminus \{x_{n-1},x_n\})^{\mathcal{P}}),
\end{eqnarray*}
where $x_mx_{n-1},x_{n-1}x_{m+1},\ldots,x_{n-1}x_{n-2}\in E(D\setminus \{x_n\})$ are all edges adjacent to $x_{n-1}$ in $D\setminus \{x_n\}$ and $D\setminus \{x_{n-1},x_n\}$ is a rooted forest with the vertices $D\setminus \{x_{n-1},x_n\}$ and  edges  incident to $x_{n-1}$ or $x_n$ removed.

Let $L=(\prod\limits_{j=1}^{w_{m+1}}x_{m+1,j},\ldots,\prod\limits_{j=1}^{w_{n-2}}x_{n-2,j}, x_{m1}\prod\limits_{j=2}^{w_{n-1}}x_{n-1,j})+I(D\setminus \{x_{n-1},x_n\})^{\mathcal{P}}$, then
$L$ is  the polarization of the edge ideal of a rooted forest $H$ having $n-m-1$  connected components, each component is added  another leaf with weight $w_i-1$ to its root $x_i$ for $i=m+1,\ldots,n-2$. This implies $|E(H)|=|E(D)|-1$.
It follows that $\mbox{pd}\,(L)=|E(H)|-1=|E(D)|-2$ by the inductive hypothesis.

 Again using Lemma \ref{lem3} (2), Corollary \ref{cor1} (2) and Lemma \ref{lem2} (2), we obtain
\begin{eqnarray*}
\mbox{pd}\,(I(D))&=&\mbox{pd}\,(I(D)^{\mathcal{P}})=\mbox{max}\{\mbox{pd}\,(J), \mbox{pd}\,(K), \mbox{pd}\,(J\cap K)+1\}\\
&=&\mbox{max}\{0, \mbox{pd}\,(I(D\setminus \{x_n\})^{\mathcal{P}}), \mbox{pd}\,(L)+1\}\\
&=&\mbox{max}\{0, \mbox{pd}\,(I(D\setminus \{x_n\})), \mbox{pd}\,(L)+1\}\\
&=&\mbox{max}\{0, |E(D)|-2, |E(D)|-1\}\\
&=&|E(D)|-1.
\end{eqnarray*}
The proof is complete.
\end{proof}

\vspace{3mm}As a consequence  of the above theorem, we have
\begin{Corollary} \label{cor3}
Let $D$ be a weighted rooted forest  as in Theorem \ref{Thm3}. Then $$\mbox{depth}\,(I(D))=2.$$
\end{Corollary}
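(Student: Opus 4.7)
The plan is to derive this corollary immediately from Theorem~\ref{Thm3} by the Auslander--Buchsbaum formula, exactly as in the proof of Corollary~\ref{cor2}. Since the ambient polynomial ring $S=k[x_1,\ldots,x_n]$ with $n=|V(D)|$ is a regular graded ring of depth $n$, and $I(D)$ is a nonzero finitely generated graded $S$-module, Auslander--Buchsbaum gives
$$\mbox{depth}\,(I(D))\;=\;\mbox{depth}\,(S)-\mbox{pd}\,(I(D))\;=\;|V(D)|-\mbox{pd}\,(I(D)).$$

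Substituting the value $\mbox{pd}\,(I(D))=|E(D)|-1$ supplied by Theorem~\ref{Thm3} yields $\mbox{depth}\,(I(D))=|V(D)|-|E(D)|+1$. The remaining step is purely combinatorial: for a (connected) rooted tree one has $|V|-|E|=1$, whence the claimed value $2$. If the rooted forest has multiple connected components, the same mechanism applies after splitting off each component via Lemma~\ref{lem1}~(1) — exactly the reduction already performed at the start of the proof of Theorem~\ref{Thm3} — and the numerical value becomes $t+1$ where $t$ is the number of components; the stated answer $2$ then corresponds to the tree case, which is the setting the corollary evidently has in mind.

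There is essentially no genuine obstacle here: all the nontrivial content sits inside Theorem~\ref{Thm3} (namely the Betti-splitting / polarization induction that computes the projective dimension), and the corollary is a one-line consequence of Auslander--Buchsbaum once that theorem is in hand. The only thing to be mildly careful about is bookkeeping with $|V(D)|$ versus $|E(D)|$ so that the tree identity $|V|-|E|=1$ is applied correctly; no new homological input is required.
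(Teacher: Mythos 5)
Your proposal is correct and follows exactly the paper's route: Auslander--Buchsbaum applied to the projective dimension computed in Theorem~\ref{Thm3}, the paper simply writing the depth of the ambient ring as $|E(D)|+1$ where you write $|V(D)|$ and then invoke the tree identity $|V|-|E|=1$. Your extra remark about the multi-component case is in fact a sharper reading than the paper's own one-line proof: for a forest with $t$ components the same computation gives $\mbox{depth}\,(I(D))=t+1$, so the stated value $2$ is literally correct only when $D$ is a tree, a point the paper's proof silently elides by identifying $|V(D)|$ with $|E(D)|+1$.
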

\begin{proof} By Auslander-Buchsbaum formula (see Theorem 1.3.3 of \cite{BH}), it follows that
$$\mbox{depth}\,(I(D))=(|E(D)|+1)-\mbox{pd}\,(I(D))=2.$$
\end{proof}

\medskip
\begin{Theorem}\label{Thm4}
Let $D=(V(D),E(D),w)$ be a weighted rooted forest such that  $w(x)\geq 2$  for any vertex $x$. Then $\mbox{reg}\,(I(D))=\sum\limits_{x\in V(D)}w(x)-|E(D)|+1$.
\end{Theorem}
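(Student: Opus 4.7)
The strategy mirrors the proof of Theorem \ref{Thm3}, replacing projective dimension by regularity throughout. First I would reduce to the connected case: if $D_1, \ldots, D_t$ are the connected components of $D$, iterated application of Lemma \ref{lem1}(2) gives $\mbox{reg}(I(D)) = \sum_{i=1}^{t} \mbox{reg}(I(D_i)) - (t-1)$, and since $\sum_i |V(D_i)| = |V(D)|$ and $\sum_i |E(D_i)| = |E(D)|$, assuming the formula on each rooted tree $D_i$ yields the formula on $D$. So I may assume $D$ is a rooted tree and induct on $|E(D)|$, with base case the weighted rooted star (Theorem \ref{thm2}).

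For the inductive step, pick a leaf $x_n$ with parent $x_{n-1}$, polarize, and split $I(D)^{\mathcal{P}} = J + K$ with $J = (x_{n-1,1}\prod_{j=1}^{w_n}x_{nj})$ and $K = I(D\setminus\{x_n\})^{\mathcal{P}}$. Since $J$ is principal it has linear resolution, so Theorem \ref{Thm1} makes $J + K$ a Betti splitting, and Corollary \ref{cor1}(1) gives
$$\mbox{reg}(I(D)) = \max\{\mbox{reg}(J),\,\mbox{reg}(K),\,\mbox{reg}(J\cap K) - 1\}.$$
Lemma \ref{lem2}(1) yields $\mbox{reg}(J) = w_n + 1$; the inductive hypothesis on $D \setminus \{x_n\}$ together with Lemma \ref{lem3}(2) gives $\mbox{reg}(K) = \sum_{x\in V(D)}w(x) - w_n - |E(D)| + 2$ in closed form. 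For the intersection I would reuse the identity $J \cap K = J \cdot L$ established in the proof of Theorem \ref{Thm3}, where $L$ is the polarized edge ideal of a rooted forest $H$ with $|E(H)| = |E(D)| - 1$. Since $J$ and $L$ have disjoint variable supports in the polarized ring (the only $x_{n-1,j}$ appearing in $J$ is $x_{n-1,1}$, while $L$ involves only $x_{n-1,j}$ with $j \geq 2$, and $x_{n,j}$ does not appear in $L$ at all), Lemma \ref{lem2}(3) gives $\mbox{reg}(J \cap K) = \mbox{reg}(L) + (w_n + 1)$. Applying the inductive hypothesis to $H$ (and Lemma \ref{lem3}(2) to pass from $L$ to $I(H)$), a direct verification that $\mbox{reg}(J \cap K) - 1$ dominates the maximum finishes the induction.

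The main obstacle is twofold. First, careful bookkeeping is needed to track $\sum_{y \in V(H)} w_H(y)$ and $|E(H)|$: the forest $H$ attaches a new pendant leaf of weight $w_i - 1$ at each of $x_{m+1},\ldots,x_{n-2}$, together with an additional pendant leaf of weight $w_{n-1} - 1$ at the parent $x_m$ of $x_{n-1}$, and these contributions must telescope against the removed weights of $x_{n-1}$ and $x_n$ so that $\mbox{reg}(J \cap K) - 1 = (w_n + 1) + \mbox{reg}(I(H)) - 1$ collapses exactly to $\sum_{x\in V(D)} w(x) - |E(D)| + 1$. Second, the new pendant leaves can carry weight $1$ when some $w_i = 2$, in which case the inductive hypothesis does not literally apply to $H$; this forces either a strengthening of the induction statement to permit weight-$1$ pendants (where the formula still holds because a pendant of weight $1$ corresponds to a squarefree edge generator) or the peeling off of such leaves through an additional Betti splitting before descending.
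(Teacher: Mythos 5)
Your proposal follows essentially the same route as the paper's own proof: reduction to a rooted tree via Lemma \ref{lem1}(2), induction on $|E(D)|$ with the star graph as base case, polarization, the Betti splitting $I(D)^{\mathcal{P}}=J+K$ with $J\cap K=JL$ for the same auxiliary forest $H$, and Corollary \ref{cor1}(1) to assemble the answer. The two obstacles you flag are real — and the second one (pendant leaves of weight $w_i-1=1$ when $w_i=2$, where the stated hypothesis $w(x)\geq 2$ fails for $H$) is a subtlety the paper itself passes over in silence, so your proposed remedy of strengthening the induction statement is a genuine improvement in rigor rather than a deviation in method.
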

\begin{proof}  Let $D_1,\ldots,D_t$ be all  connected components of $D$, then for each $D_i$, it is a rooted tree.
By Lemma \ref{lem1} (2), we have $$\mbox{reg}\,(I(D))=\mbox{reg}\,(\sum\limits_{i=1}^{t}I(D_i))=\sum\limits_{i=1}^{t}\mbox{reg}\,(I(D_i))-(t-1).$$
It is enough to prove that $\mbox{reg}\,(I(D_i))=\sum\limits_{x\in V(D_i)}w(x)-|E(D_i)|+1$ for  $i=1,\ldots,t$. Hence
we may assume that $D$ is a rooted tree with  the vertex set $V=\{x_1,\ldots, x_n\}$. We apply induction on $|E(D)|$.

The case $D$ is a weighted oriented star graph follows from Theorem \ref{thm2}.
Let $x_n$ be  a leaf of $D$ which  adjacent to $x_{n-1}$, then by the proof of Theorem \ref{Thm3}, we have
$$I(D)=(x_{n-1}x_n^{w_n})+I(D\setminus \{x_n\}),\hspace{3mm}
I(D)^{\mathcal{P}}=(x_{n-1,1}\prod\limits_{j=1}^{w_n} x_{nj})+I(D\setminus \{x_n\})^{\mathcal{P}}
$$
and
$I(D)^{\mathcal{P}}$ is a Betti splitting ideal with splitting $I(D)^{\mathcal{P}}=J+K$,
where  $J=(x_{n-1,1}\prod\limits_{j=1}^{w_n} x_{nj})$, $K=I(D\setminus \{x_n\})^{\mathcal{P}}$, $J\cap K=JL$
and $$
L=(\prod\limits_{j=1}^{w_{m+1}}x_{m+1,j},\ldots,\prod\limits_{j=1}^{w_{n-2}} x_{n-2,j},x_{m1}\prod\limits_{j=2}^{w_{n-1}}x_{n-1,j})+I(D\setminus \{x_{n-1},x_n\})^{\mathcal{P}},
$$
where $D\setminus \{x_n\}$  is the subgraph of $D$ with vertex $x_n$ and edge $x_{n-1}x_n$ removed.
$x_mx_{n-1},x_{n-1}x_{m+1},\ldots,x_{n-1}x_{n-2}\in E(D\setminus \{x_n\})$ are all edges adjacent to $x_{n-1}$ in $D\setminus \{x_n\}$, $D\setminus \{x_{n-1},x_n\}$ is a rooted forest with the vertices $D\setminus \{x_{n-1},x_n\}$ and  edges  incident to $x_{n-1}$ or $x_n$ removed, $L$ is  the polarization of the edge ideal of a rooted forest $H$ having $(n-m-1)$  connected components, each component is added  another leaf with weight $w_i-1$ to its root $x_i$ for $i=m+1,\ldots,n-2$.
Thus
$D\setminus \{x_n\}$ is a rooted tree with $|E(D)|-1$ edges. By the inductive hypothesis, we have
\begin{eqnarray*}\mbox{reg}\,(I(D\setminus \{x_n\}))&=&\sum\limits_{i=1}^{n-1}w_i-|E(D\setminus \{x_n\})|+1\\
&=&\sum\limits_{i=1}^{n-1}w_i-|E(D)|+2,
\end{eqnarray*}
and
\begin{eqnarray*}\mbox{reg}\,(L)&=&\sum\limits_{x\in V(H)}w(x)-|E(H)|+(n-m-2)
-(n-m-2)\\
&=&\sum\limits_{x\in V(H)}w(x)-|E(H)|=\sum\limits_{x\in V(H)}w(x)-(|E(D)|-1)\\
&=&\sum\limits_{x\in V(H)}w(x)-|E(D)|+1.
\end{eqnarray*}
Lemma \ref{lem2} implies
\begin{eqnarray*}\mbox{reg}\,(J\cap K)&=&\mbox{reg}\,(J)+\mbox{reg}\,(K)\\
&=&(w_n+1)+\sum\limits_{x\in V(H)}w(x)-|E(D)|+1\\
&=&\sum\limits_{i=1}^{n}w_i-|E(D)|+2.
\end{eqnarray*}

It follows that
\begin{eqnarray*}\mbox{reg}\,(I(D))&=&\mbox{reg}\,(I(D)^{\mathcal{P}})=\mbox{max}\{\mbox{reg}\,(J),\mbox{reg}\,(K),\mbox{reg}\,(J\cap K)-1\}\\
&=&\mbox{max}\{w_n+1,\sum\limits_{i=1}^{n-1}w_i-|E(D)|+2,\sum\limits_{i=1}^{n}w_i-|E(D)|+1\}\\
&=&\sum\limits_{i=1}^{n}w_i-|E(D)|+1.
\end{eqnarray*}
The proof is complete.
\end{proof}

\medskip
\section{Projective dimensions and  regularities of  edge ideals of  oriented  cycles}

In this section, we will provide some  formulas for the projective dimensions and  regularities of   edge ideals of some oriented cycles. As some consequences, we  also give some exact formulas for the depth
of edge ideals of oriented cycles.

\begin{Theorem}\label{Thm4}
Let $D=(V(D),E(D),w)$ be a weighted oriented  cycle such that  $w(x)\geq 2$   for any  vertex $x$. Then
$$\mbox{pd}\,(I(D))=|E(D)|-1 \ \ \mbox{and}\ \
 \mbox{reg}\,(I(D))=\sum\limits_{x\in V(D)}w(x)-|E(D)|+1.$$
\end{Theorem}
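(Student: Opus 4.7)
Without loss of generality I take the cycle to be oriented as $x_1\to x_2\to\cdots\to x_n\to x_1$ and set $w_i=w(x_i)$, so that
\[ I(D)=(x_1 x_2^{w_2},\,x_2 x_3^{w_3},\,\ldots,\,x_{n-1}x_n^{w_n},\,x_n x_1^{w_1}). \]
The plan is to mimic the Betti-splitting strategy used in the proofs of Theorems \ref{Thm3} and 3.4: pass to the polarization, detach the ``closing'' edge $x_nx_1^{w_1}$ via a Betti splitting, handle the complementary piece with the rooted-forest results already proved, and analyze the intersection term by a second splitting together with an induction on $n$.

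First I would polarize and split $I(D)^{\mathcal{P}}=J+K$, where $J=(x_{n,1}x_{1,1}\cdots x_{1,w_1})$ is the polarization of the closing edge and $K$ is the polarization of the other $n-1$ edges. Because $w_1\geq 2$, the variable $x_{1,w_1}$ appears only in the generator of $J$, and $J$ is principal; hence Theorem \ref{Thm1} shows this is a Betti splitting. The decisive feature is that $K=I(P)^{\mathcal{P}}$ for the weighted rooted path $P\colon x_1\to x_2\to\cdots\to x_n$, so the rooted-forest theorems together with Lemma \ref{lem3} produce $\mbox{pd}\,(K)$ and $\mbox{reg}\,(K)$. Next I would compute $J\cap K=J\cdot L$ by examining $\mathrm{lcm}(g,x_{n,1}x_{1,1}\cdots x_{1,w_1})$ for each generator $g$ of $K$; only the two edges meeting $x_1$ or $x_n$ contribute nontrivial sharing, and the resulting $L$ is, after an obvious relabeling of the $x_{n,j}$'s, the polarization of
\[ M=(x_2^{w_2},\,x_2 x_3^{w_3},\,\ldots,\,x_{n-2}x_{n-1}^{w_{n-1}},\,x_{n-1}x_n^{w_n-1})\subset k[x_2,\ldots,x_n]. \]
Since $\mathrm{supp}(J)$ and $\mathrm{supp}(L)$ are disjoint, Lemma \ref{lem2} combined with Lemma \ref{lem3} gives $\mbox{pd}\,(J\cap K)=\mbox{pd}\,(M)$ and $\mbox{reg}\,(J\cap K)=\mbox{reg}\,(M)+w_1+1$.

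The main work is to show $\mbox{pd}\,(M)=n-2$ and $\mbox{reg}\,(M)=\sum_{i=2}^{n}w_i-n+1$. I would apply a second Betti splitting to $M^{\mathcal{P}}$, peeling off the pure power $\mathcal{P}(x_2^{w_2})$ (valid because $w_2\geq 2$ provides the ``private'' variable $x_{2,w_2}$). The complementary piece is the polarization of the edge ideal of the rooted path $x_2\to\cdots\to x_n$ with ending-point weights $w_3,\ldots,w_{n-1},w_n-1$, and its intersection with the pure-power piece is, up to a disjoint-support factor, an ideal of exactly the same cycle-remainder shape as $M$ with $n$ replaced by $n-1$. Induction on $n$ then delivers the claimed invariants of $M$, and feeding everything into Corollary \ref{cor1} yields $\mbox{pd}\,(I(D)^{\mathcal{P}})=n-1$ and $\mbox{reg}\,(I(D)^{\mathcal{P}})=\sum_{i=1}^{n}w_i-n+1$; Lemma \ref{lem3} then transfers the conclusions to $I(D)$.

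The main obstacle I expect is the recursive computation of $M$. Two snags require care: the pure power $x_2^{w_2}$ means $M$ is not itself an edge ideal of a rooted tree, so the rooted-forest theorems cannot be invoked on $M$ as a black box; and when $w_n=2$ the exponent $w_n-1=1$ violates the $w(x)\geq 2$ hypothesis in the auxiliary path ideal, a case to be dispatched by one additional Betti splitting peeling off the last edge $x_{n-1}x_n$ or by a direct Koszul-type calculation.
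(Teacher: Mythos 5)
Your strategy is essentially the paper's: the paper also polarizes, splits off the closing edge as $J_1+K_1$, identifies $J_1\cap K_1=J_1L_2$ where $L_2$ is exactly the polarization of your ideal $M$, and then runs the same recursion you propose for $M$ --- peeling the pure power $\mathcal{P}(x_i^{w_i})$ off $L_i$ to produce an ideal $L_{i+1}$ of the same shape on one fewer vertex --- rather than quoting the forest theorems for the truncated path. So your two anticipated ``snags'' are resolved in the same way the paper resolves them, by keeping the whole chain inside the recursion.

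The one step you should not carry out as written is taking $\mbox{reg}\,(K)$ from the rooted-forest regularity formula as a black box. For the path $P\colon x_1\to\cdots\to x_n$ that formula reads $\sum_{i=1}^{n}w_i-(n-1)+1=\sum_{i=1}^{n}w_i-n+2$, which strictly exceeds $\mbox{reg}\,(J\cap K)-1=\sum_{i=1}^{n}w_i-n+1$, so Corollary \ref{cor1}(1) would return $\sum_{i=1}^{n}w_i-n+2$ rather than the claimed value. The point is that $w_1$ is the weight of the source of $P$ and never occurs in $I(P)=(x_1x_2^{w_2},\ldots,x_{n-1}x_n^{w_n})$; the actual regularity of this path ideal is $\sum_{i=2}^{n}w_i-n+3$ (the forest formula with the root's weight treated as $1$), and this is $\leq\sum_{i=1}^{n}w_i-n+1$ precisely because $w_1\geq 2$, with equality when $w_1=2$. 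With that correction, and with the $w_n=2$ edge case of $M$ absorbed into the recursion itself as you anticipate, your computation closes and gives the same final answer as the paper.
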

\begin{proof}  Let $V=\{x_1,\ldots, x_n\}$ and $w_i=w(x_i)$, then
  \begin{eqnarray*}I(D)&=&(x_1x_2^{w_2},\ldots,x_{n-1}x_n^{w_n},x_nx_1^{w_1}),\\
I(D)^{\mathcal{P}}&=&(x_{11}\!\!\prod\limits_{j=1}^{w_2}x_{2j},\ldots,x_{n-1,1}\!\!\prod\limits_{j=1}^{w_n}x_{nj},
x_{n1}\!\!\prod\limits_{j=1}^{w_1}x_{1j}).
\end{eqnarray*}
Set $L_1=I(D)^{\mathcal{P}}$, $J_1=(x_{n1}\!\!\prod\limits_{j=1}^{w_1}x_{1j})$, $K_1=(x_{11}\!\!\prod\limits_{j=1}^{w_2}x_{2j},\ldots,x_{n-1,1}\!\!\prod\limits_{j=1}^{w_n}x_{nj})$,
$J_i=(\prod\limits_{j=1}^{w_i}x_{ij})$, $K_i=(x_{i1}\!\!\prod\limits_{j=1}^{w_{i+1}}x_{i+1,j},\ldots,x_{n-1,1}\!\!\prod\limits_{j=2}^{w_n}x_{nj})$,
$L_i=(\prod\limits_{j=1}^{w_i}x_{ij},x_{i1}\!\!\prod\limits_{j=1}^{w_{i+1}}x_{i+1,j},\ldots,x_{n-1,1}\!\!\prod\limits_{j=2}^{w_n}x_{nj})$
for any $2\leq i\leq n-1$ and $L_n=(\prod\limits_{j=2}^{w_n}x_{nj})$.
Notice that all $J_i$ have  linear resolutions for  $1\leq i\leq n-1$, it follows that
$L_i=J_i+K_i$ is a Betti splitting.
Also notice that $J_i\cap K_i=J_iL_{i+1}$ and the fact the variables that appear in $J_i$ and $L_{i+1}$ are different, by Lemma \ref{lem2}, Corollary \ref{cor1},
we obtain,  for $1\leq i\leq n-1$,
\begin{eqnarray*}
\mbox{pd}\,(J_{i}\cap K_{i})&=&\mbox{pd}\,(L_{i+1})=\mbox{max}\{\mbox{pd}\,(J_{i+1}),\mbox{pd}\,(K_{i+1}), \mbox{pd}\,(J_{i+1}\cap K_{i+1})+1\},\\
\mbox{reg}\,(J_{i}\cap K_{i})&=&\mbox{reg}\,(J_{i}L_{i+1})=\mbox{reg}\,(J_{i})+\mbox{reg}\,(L_{i+1}),\\
\mbox{reg}\,(L_{i+1})&=&\mbox{max}\{\mbox{reg}\,(J_{i+1}),\mbox{reg}\,(K_{i+1}), \mbox{reg}\,(J_{i+1}\cap K_{i+1})-1\}. \hspace{3cm} (1)
\end{eqnarray*}
Since $J_{i}$, $K_{n-1}$ and $J_{n-1}\cap K_{n-1}$  are principal  ideals, $\mbox{pd}\,(J_{i})=\mbox{pd}\,(J_{n-1}\cap K_{n-1})=\mbox{pd}\,(K_{n-1})=0$ for $1\leq i\leq n-1$.
By repeated use of  the above equalities (1) and  induction on $n$ and $i$, we can obtain that  $\mbox{pd}\,(K_{i})=n-i-1$, $\mbox{pd}\,(L_{i})=n-i$, $\mbox{reg}\,(K_{i})=\mbox{reg}\,(L_{i+1})+1$, $\mbox{reg}\,(L_{i})=\sum\limits_{j=i}^{n}w_j-(n-i+1)$.
It follows that $\mbox{pd}\,(K_{1})=n-2$, $\mbox{pd}\,(J_{1}\cap K_{1})=\mbox{pd}\,(L_{2})=n-2$ and $\mbox{reg}\,(K_{1})=\sum\limits_{j=2}^{n}w_j-(n-1)$ and
$\mbox{reg}\,(J_{1}\cap K_{1})=\sum\limits_{j=1}^{n}w_j-(n-2)$.
Thus,
\begin{eqnarray*}\mbox{pd}\,(L_{1})&=&\mbox{max}\{\mbox{pd}\,(J_{1}),\mbox{pd}\,(K_{1}), \mbox{pd}\,(J_{1}\cap K_{1})+1\}\\
&=&\mbox{max}\{0,n-2,n-2+1\}=n-1,
\end{eqnarray*}
\begin{eqnarray*}\mbox{reg}\,(L_{1})&=&\mbox{max}\{\mbox{reg}\,(J_{1}),\mbox{reg}\,(K_{1}), \mbox{reg}\,(J_{1}\cap K_{1})-1\}\\
&=&\mbox{max}\{w_1+1,\sum\limits_{j=2}^{n}w_j-(n-1),\sum\limits_{j=1}^{n}w_j-(n-2)-1\}\\
&=&\sum\limits_{j=1}^{n}w_j-(n-1).
\end{eqnarray*}
This concludes the proof of the theorem.
\end{proof}

\medskip
An immediate consequence of the above theorem is the following corollary.
\begin{Corollary} \label{cor4}
Let $D=(V(D),E(D),w)$ be a weighted oriented  cycle such that  $w(x)\geq 2$   for any  vertex $x$. Then  $\mbox{depth}\,(I(D))=1$.
\end{Corollary}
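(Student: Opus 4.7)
The plan is to pass to the polarization $I(D)^{\mathcal{P}}$, which preserves projective dimension and regularity by Lemma \ref{lem3}, and then apply the Betti splitting technique of Theorem \ref{Thm1} iteratively, peeling off one generator at a time.

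Writing $V(D) = \{x_1, \ldots, x_n\}$ with the edges listed in cyclic order, I set $L_1 = I(D)^{\mathcal{P}}$ and decompose $L_1 = J_1 + K_1$, where $J_1$ is the single polarized generator coming from the wrap-around edge $x_n x_1^{w_1}$ and $K_1$ collects the polarized generators of the directed path $x_1 \to \cdots \to x_n$. Since $J_1$ is principal it has a linear resolution, so Theorem \ref{Thm1} gives a Betti splitting. A direct lcm computation shows $J_1 \cap K_1 = J_1 \cdot L_2$, where $L_2$ is a smaller squarefree monomial ideal whose variable support is disjoint from that of $J_1$. Iterating for $i \ge 2$ produces analogous decompositions $L_i = J_i + K_i$ with $J_i$ principal and $J_i \cap K_i = J_i \cdot L_{i+1}$, and the chain terminates at the principal ideal $L_n$, whose pd and reg are immediate.

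At each step, Lemma \ref{lem2} together with the disjointness of supports gives
\[ \mbox{reg}(J_i \cap K_i) = \mbox{reg}(J_i) + \mbox{reg}(L_{i+1}), \quad \mbox{pd}(J_i \cap K_i) = \mbox{pd}(L_{i+1}), \]
and a further Betti splitting applied inside each $K_i$ for $i \ge 2$ expresses $\mbox{pd}(K_i)$ and $\mbox{reg}(K_i)$ in terms of $\mbox{pd}(L_{i+2})$ and $\mbox{reg}(L_{i+2})$. Feeding these into Corollary \ref{cor1} and running a backward induction on $i$ from $n$ down to $2$ yields $\mbox{pd}(L_i) = n - i$ together with the expected formula for $\mbox{reg}(L_i)$. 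A final application of Corollary \ref{cor1} to the splitting $L_1 = J_1 + K_1$ then gives $\mbox{pd}(L_1) = n - 1 = |E(D)| - 1$ and $\mbox{reg}(L_1) = \sum_{x \in V(D)} w(x) - |E(D)| + 1$.

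The main obstacle will be the bookkeeping: at every stage one must correctly identify the generators of $L_i$ and $K_i$, check that $J_i$ is principal so Theorem \ref{Thm1} applies, and verify that the polarized variables of $J_i$ are disjoint from those of $L_{i+1}$ so that Lemma \ref{lem2} yields the claimed product formula for $J_i \cdot L_{i+1}$. Once the structural setup is secured, the remainder is a routine telescoping of the Betti splitting recursion.
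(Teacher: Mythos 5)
Your proposal never actually proves the statement at hand. The corollary asserts that $\mbox{depth}\,(I(D))=1$, but everything you write is devoted to computing $\mbox{pd}\,(I(D))$ and $\mbox{reg}\,(I(D))$ --- that is, you are re-proving the theorem on weighted oriented cycles that immediately precedes this corollary, which you could simply have cited. Nowhere do you explain how a projective-dimension computation yields a statement about depth. The missing ingredient is the Auslander--Buchsbaum formula (Theorem 1.3.3 of \cite{BH}): since a cycle on $n$ vertices has exactly $n$ edges, the ambient polynomial ring has $|V(D)|=|E(D)|$ variables, and therefore
$$\mbox{depth}\,(I(D))=|E(D)|-\mbox{pd}\,(I(D))=|E(D)|-(|E(D)|-1)=1.$$
This single line is the entire content of the paper's proof of the corollary; without it your argument, however careful its bookkeeping, establishes a different statement.

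As for the part you did write: it follows essentially the same iterated Betti-splitting scheme the paper uses to prove the projective-dimension and regularity formulas for oriented cycles (polarize, peel off the wrap-around generator $J_1$, telescope through the ideals $L_i$ with $J_i\cap K_i=J_iL_{i+1}$, and close the recursion at the principal ideal $L_n$), so the computational core is sound in outline. But that work belongs to the proof of the theorem, not of this corollary, and it must still be capped off with the Auslander--Buchsbaum step to say anything about depth.
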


\begin{proof} By Auslander-Buchsbaum formula (see Theorem 1.3.3 of \cite{BH}), it follows that
$$\mbox{depth}\,(I(D))=|E(D)|-\mbox{pd}\,(I(D))=1.$$
\end{proof}

\medskip

\hspace{-6mm} {\bf Acknowledgments}

 \vspace{3mm}
\hspace{-6mm}  This research is supported by the National Natural Science Foundation of China (No.11271275 and No.11471234) and  by foundation of the Priority Academic Program Development of Jiangsu Higher Education Institutions.

\end{document}